\documentclass[twocolumn]{autart}

\usepackage{cite}
\usepackage{amsmath,amssymb,amsfonts,bm}
\usepackage{bbm}
\usepackage{graphicx}
\usepackage{subfigure}
\usepackage{textcomp}
\usepackage{algorithm}
\usepackage{algpseudocode}
\usepackage{enumitem}
\usepackage{mathrsfs}
\usepackage{xcolor}

\makeatletter
\renewcommand*\env@matrix[1][*\c@MaxMatrixCols c]{%
	\hskip -\arraycolsep
	\let\@ifnextchar\new@ifnextchar
	\array{#1}}
\makeatother

\allowdisplaybreaks[4]

\newenvironment{proof}{\begin{pf}}{\end{pf}}

\makeatletter
\def\add@ead#1#2{%
  \expandafter\add@tok\csname #1@text\endcsname{\texttt{#2}\ead@au}%
  \ignorespaces
}
\makeatother
\begin{document}

\begin{frontmatter}

\title{Adaptive Linear Quadratic Control of Unknown Linear Time-Varying Systems via Policy Gradient Methods\thanksref{footnoteinfo}}

\thanks[footnoteinfo]{This work was supported by ETH Zurich and the SNF through the NCCR Automation. {A preliminary version of part of this work has been accepted for presentation at the 2026 European Control Conference (ECC).} Corresponding author F.~Zhao.}

\author[eth]{Feiran Zhao}\ead{zhaofe@control.ee.ethz.ch},
\author[eth]{Florian D{\"o}rfler}\ead{dorfler@control.ee.ethz.ch}

\address[eth]{Department of Information Technology and Electrical Engineering, ETH Z{\"u}rich, 8092 Z{\"u}rich, Switzerland.}

\begin{keyword}
Adaptive control; linear quadratic regulator; linear systems; data-driven control.
\end{keyword}

\begin{abstract}
Unknown linear time-varying (LTV) systems require the control policy to adapt from online closed-loop data as dynamics evolve. Existing methods usually update the policy by solving a one-shot optimization problem, which can be computationally demanding and sensitive to noisy model estimates. In this paper, we propose a policy gradient adaptive control (PGAC) method for LTV system control with unknown model parameters. Specifically, PGAC integrates online policy optimization into feedback by updating the state-feedback policy with one-step gradient descent of the linear quadratic regulator cost at each time instant. This incremental update is computationally light and naturally limits policy variations caused by noisy data. To explicitly compute the policy gradient online, we estimate local models from recent closed-loop trajectories using normalized sliding-window least-squares. We provide stability and convergence certificates of PGAC for two classes of LTV systems. For slowly time-varying systems, we prove that the closed-loop system achieves practical exponential stability without a dwell-time condition. For piecewise-constant LTV systems, we establish practical stability through a dwell-time contraction argument. We also provide average frozen-time optimality-gap bounds of the policy sequence for both classes. Finally, we validate the effectiveness of our method via numerical case studies of both LTV and nonlinear systems.
\end{abstract}

\end{frontmatter}

\section{Introduction}
Recent years have witnessed the resurgence of data-driven control, which aims to learn controllers for unknown dynamical systems from raw data matrices. {Many} approaches can be broadly divided into \textit{indirect} (system identification followed by model-based control) and \textit{direct} (bypassing system identification) approaches, which have their own pros and cons \cite{dorfler2023data}. Representative instances include data-driven predictive control \cite{coulson2019data, breschi_data-driven_2023, berberich2020data}, data-driven linear quadratic regulator (LQR) \cite{de2019formulas, dorfler2021certainty}, the data-informativity approach \cite{van2020data}, and regret-minimization control \cite{tsiamis2023statistical}. Most of them focus on offline data and linear time-invariant (LTI) systems, which serve as a primary benchmark for comparing and validating different data-driven control methods.

{Linear time-varying (LTV) systems, whose model parameters are unknown and change over time, are an expressive system class motivated by practical applications \cite{rugh2000research, falcone2007predictive}. Since future parameters are usually unpredictable, controlling LTV systems requires continual adaptation of the feedback policy from online closed-loop data. This requirement fundamentally differs from offline data-driven control: the controller must be efficiently recomputed or updated while the plant evolves, and the data window may contain samples generated by different dynamics. The aforementioned methods \cite{coulson2019data, breschi_data-driven_2023, berberich2020data, de2019formulas, dorfler2021certainty, van2020data}, however, learn a policy from a batch of offline data, and this policy is fixed during the operation. Hence, they cannot be directly adopted for LTV system control.}

{To this end, LTV variants of data-driven and adaptive control methods have been proposed recently \cite{nortmann2023direct, rotulo2022online, liu2023online, iannelli2025hybrid, qu2021stable, minasyan2021online, dai2026online, bartos2025stability}. The LQR parameterization \cite{de2019formulas} is extended to input-affine nonlinear systems \cite{dai2026online} and switched linear systems \cite{rotulo2022online} by using online collected data, where the policy is updated at every time step and the stability certificates rely on dwell-time conditions \cite{rotulo2022online}. The data-informativity approach \cite{van2020data} is extended to slowly time-varying systems in \cite{liu2023online}, leading to stabilizing policies by solving a sequence of data-based semi-definite programming (SDP) problems online. A direct adaptive control method is proposed in \cite{iannelli2025hybrid}, where the linear state-feedback policy is updated via an event-triggered rule of a data-based Lyapunov function. The indirect certainty-equivalence LQR approach is adopted to stabilize LTV systems with a projected least mean square (LMS) estimator \cite{bartos2025stability}. There are also regret-minimization control methods that aim at both stabilization and low incurred cost of LTV systems \cite{qu2021stable, minasyan2021online}. These approaches provide important stability and performance guarantees, but many of them repeatedly solve optimization problems such as SDPs or Riccati equations \cite{qu2021stable, rotulo2022online}. For high-dimensional systems or fast variations, such one-shot online optimization can be computationally demanding; moreover, the optimizer computed from noisy closed-loop data may vary significantly from one update to the next, which can lead to unfavorable robustness and stability issues.}

{The policy gradient (PG) method, an essential approach of reinforcement learning \cite{bin2022towards, fazel2018global}, provides a lightweight alternative to one-shot online optimization. Instead of recomputing a full optimizer, PG updates the current policy by one descent step of the LQR cost. For the LQR, PG methods enjoy global convergence guarantees under proper initialization and stepsize choices \cite{fazel2018global}, and the gradient has a closed-form expression once a model estimate is available. This incremental structure is particularly appealing for time-varying adaptive control: the gain update is computationally light and can be regulated by the stepsize, which improves robustness to noisy model estimates compared with one-shot certainty-equivalence optimization \cite{qu2021stable, rotulo2022online}. Our previous works \cite{zhao2024data, zhao2025policy} developed policy gradient adaptive control (PGAC) for unknown LTI systems, where the policy is updated in closed-loop feedback using online data. PGAC has also seen successful applications on complicated nonlinear and time-varying systems, including district heating systems \cite{yi2026data}, grid-connected inverters \cite{11366674, zhao2024direct}, aerospace control~\cite{10981973}, and autonomous bicycle control~\cite{persson2025adaptive}.}

{However, it remains unclear if the policy gradient update has convergence and stability certificates for LTV systems~\cite{zhao2025policy}. Unlike standard PG methods for a fixed LQR objective \cite{fazel2018global, bin2022towards}, the objective here changes with the plant, and the gradient is computed from a sliding window of closed-loop data generated by time-varying dynamics. Thus, the policy, the model estimate, and the closed-loop state evolve simultaneously in feedback. In a preliminary conference version \cite{laurent2026adaptive}, PGAC was extended to switched linear systems with practical stability certificates; however, a dwell-time condition was indispensable even for slow variations, and the bounds were not uniform. This paper takes a step further and develops a stability and convergence theory for PGAC under unknown LTV dynamics.}

{Our main contributions are summarized as below.}
\begin{itemize}[leftmargin=*]
	\item {We develop a PGAC method for unknown LTV systems, where the policy is updated in closed-loop feedback from online data. Unlike existing LTV data-driven methods that repeatedly solve SDPs or Riccati equations \cite{liu2023online, bartos2025stability}, the proposed controller takes only one policy gradient step at each time instant. This first-order update makes the adaptation computationally light and lets the stepsize directly control the policy variation, which improves robustness to noisy model estimates compared with one-shot certainty-equivalence updates \cite{qu2021stable, rotulo2022online}.}
	\item {We propose normalized sliding-window least-squares to estimate the local model. This differs from the ordinary least-squares estimator used in the LTI PGAC framework \cite{zhao2025policy}: normalization removes the need for an a priori state bound in the identification error, while the sliding window keeps the estimate adaptive to recent dynamics. The error bounds explicitly separate the effects of temporal variation, process noise, and persistent excitation.}
	\item {For slowly time-varying systems, we prove that the PGAC policy sequence remains sequentially stable~\cite{cohen2019learning}. This yields practical exponential stability with a residual term determined by the probing signal and process noise, and it does not require the dwell-time conditions used in switched-system data-driven control \cite{rotulo2022online, laurent2026adaptive}. We also bound the average frozen-time optimality gap, which {consists of a transient term due to the initial policy and bias terms due to system variation and noisy finite-window data}.}
	\item {For piecewise-constant LTV systems, where abrupt jumps can destroy global sequential stability \cite{cohen2019learning}, we prove a two-layer stability certificate: the state decays inside each fixed mode, and the interval-wise maximum contracts across switches under a dwell-time condition. Compared with the preliminary switched PGAC result \cite{laurent2026adaptive}, the result allows infinitely many switches and keeps the constants uniform. Compared to the slowly varying case, the corresponding optimality gap bound additionally depends on the switching frequency.}
\end{itemize}

The remainder of this paper is organized as follows. Section II provides preliminaries. Section III formulates the stabilization problem of LTV systems and introduces the PGAC method. Section IV provides stability certificates for the proposed method. Section V uses simulations to validate the theoretical results. Conclusions are drawn in Section VI. All proofs are deferred to the Appendix.


\section{Preliminaries}
In this section, we introduce the preliminaries on the linear quadratic regulator (LQR) problem and the policy gradient method for the LQR.  

\subsection{The model-based LQR}
Consider a linear time-invariant (LTI) system
\begin{equation}\label{equ:sys}
	\left\{\begin{aligned}
		x_{t+1} & =A x_t+B u_t+w_t \\
		z_t & =\begin{bmatrix}
			Q^{1 / 2} & 0 \\
			0 & R^{1 / 2}
		\end{bmatrix}
		\begin{bmatrix}
			x_t \\
			u_t
		\end{bmatrix}
	\end{aligned}\right.,
\end{equation}
where $t\in \mathbb{N}$, $x_t\in\mathbb{R}^{n}$ is the state, $u_t\in\mathbb{R}^{m}$ is the control input, $w_t \in \mathbb{R}^n$ is the process noise, and $z_t$ is the performance signal of interest. We assume that {the pair} $(A,B)$ {is} controllable and the weighting matrices $(Q, R)$ are positive definite~\cite{bin2022towards}.  

The LQR problem is finding a state-feedback gain $K\in \mathbb{R}^{m\times n}$ to minimize the $\mathcal{H}_2$-norm of the transfer function $\mathscr{T}(K):w \rightarrow z$ of the closed-loop system
\begin{equation}\label{equ:closedsys}
	\begin{bmatrix}
		x_{t+1} \\
		z_t
	\end{bmatrix}=\begin{bmatrix}[c|c]
		A+BK & I_n \\
		\hline \begin{bmatrix}
			Q^{1 / 2} \\
			R^{1 / 2} K
		\end{bmatrix} & 0
	\end{bmatrix}\begin{bmatrix}
		x_t \\
		w_t
	\end{bmatrix}.
\end{equation}
When $A+BK$ is stable, it holds that \cite{anderson2007optimal}
\begin{equation}\label{equ:transfer}
	\|\mathscr{T}(K)\|_2^2  = \text{Tr}((Q+K^{\top}RK)\Sigma)=:C(K),
\end{equation}
where $\Sigma$ is the closed-loop state covariance matrix obtained as the positive definite solution to the Lyapunov equation
\begin{equation}\label{equ:Sigma}
	\Sigma = I_n + \left(A+BK\right)\Sigma \left(A+BK\right)^{\top}, 
\end{equation}
and we refer to $C(K)$ as the nominal LQR cost.

Given model parameters $(A,B)$, the LQR problem (\ref{equ:transfer})-(\ref{equ:Sigma}) can be solved analytically by {standard model-based methods}, e.g., via the algebraic Riccati equation~\cite{anderson2007optimal}. In the sequel, we recapitulate the policy gradient method stemming from reinforcement learning (RL) to  iteratively solve the LQR problem (\ref{equ:transfer})-(\ref{equ:Sigma}).

\subsection{Policy gradient methods for the LQR}
The policy gradient method directly updates the feedback gain towards the optimal LQR gain using gradient descent \cite{bin2022towards}
\begin{equation}\label{equ:grad}
	K^+ = K - \eta \nabla C(K),
\end{equation}
where $\nabla C(K)$ is the gradient of the LQR with respect to the policy $K$.  Define $\mathcal{S}:=\{K\in \mathbb{R}^{m\times n}|\rho(A+BK)<1\}$ as the set of stabilizing gains. Then, for any $K\in \mathcal{S}$, the gradient $\nabla C(K)$ has a closed-form expression.
\begin{lemma}[{\cite[Lemma 1]{fazel2018global}}]\label{lem:mb_pg}
	For any $K\in\mathcal{S}$, the gradient of $C(K)$ is given by
	\begin{equation}\label{equ:gradK}
		\nabla C(K) =  2\left(\left(R+{B}^{\top} P {B}\right) K+{B}^{\top} P {A}\right) {\Sigma},
	\end{equation} 
	where ${\Sigma}$ satisfies \eqref{equ:Sigma}, and $P$ is the unique positive definite solution to the Lyapunov equation 
	\begin{equation}\label{equ:Lyap}
		P = Q + K^{\top}RK + ({A}+{B}K)^{\top}P ({A}+{B}K).
	\end{equation}
\end{lemma}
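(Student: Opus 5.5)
We compute the derivative of $C(K)=\mathrm{Tr}((Q+K^\top RK)\Sigma)$ by perturbation, using the dual form of the cost. The first step is to rewrite the cost as $C(K)=\mathrm{Tr}(P)$. For $K\in\mathcal{S}$, with $A_K:=A+BK$, the solutions of \eqref{equ:Sigma} and \eqref{equ:Lyap} are the convergent series $\Sigma=\sum_{k\ge0}A_K^k(A_K^\top)^k$ and $P=\sum_{k\ge0}(A_K^\top)^k(Q+K^\top RK)A_K^k$. Cyclicity of the trace then gives $\mathrm{Tr}((Q+K^\top RK)\Sigma)=\mathrm{Tr}(P)$. The same series show that $P$ and $\Sigma$ are the unique solutions and are positive definite, since $Q\succ0$ and the identity term is present. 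Because $\mathcal{S}$ is open and these solutions are rational in $K$ with nonvanishing denominators, $C$ is smooth on $\mathcal{S}$, so computing a directional derivative suffices.

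Next, we perturb $K\to K+\Delta$ and write $dP$ for the first-order variation of $P$. Differentiating \eqref{equ:Lyap} gives
\begin{equation*}
dP = A_K^\top\, dP\, A_K + M,\qquad M:=\Delta^\top(RK+B^\top PA_K)+(RK+B^\top PA_K)^\top\Delta .
\end{equation*}
This is a Lyapunov equation in $dP$ with forcing term $M$. Hence $dP=\sum_k (A_K^\top)^k M A_K^k$, and therefore $\mathrm{Tr}(dP)=\mathrm{Tr}(M\Sigma)$. Using the symmetry of $\Sigma$, this becomes $\mathrm{Tr}(dP)=2\,\mathrm{Tr}\big(\Delta^\top (RK+B^\top PA_K)\Sigma\big)$. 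The gradient is identified through the inner product $\langle X,Y\rangle=\mathrm{Tr}(X^\top Y)$, which gives
\begin{equation*}
\nabla C(K)=2(RK+B^\top P(A+BK))\Sigma=2((R+B^\top PB)K+B^\top PA)\Sigma,
\end{equation*}
and this is exactly \eqref{equ:gradK}.

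The main point needing care is the duality identity. We need $\mathrm{Tr}(X\Sigma)=\mathrm{Tr}(Y)$ whenever $Y=A_K^\top YA_K+X$. Both sides are given by absolutely convergent series because $\rho(A_K)<1$, and this identity is used twice: once to get $C(K)=\mathrm{Tr}(P)$ and once to get $\mathrm{Tr}(dP)=\mathrm{Tr}(M\Sigma)$. The remaining step is routine: one must justify differentiating the Lyapunov equation term by term. This follows from the implicit function theorem applied to the invertible linear operator $Y\mapsto Y-A_K^\top YA_K$, which is invertible precisely because $\rho(A_K)<1$.
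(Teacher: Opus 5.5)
Your proof is correct, but it takes a different route from the argument the paper relies on. The paper gives no proof of its own and imports Lemma 1 of \cite{fazel2018global}. There, in this paper's sign convention $u=Kx$, the argument runs as follows. One differentiates the Bellman identity $x_0^\top P x_0 = x_0^\top(Q+K^\top RK)x_0 + (A_Kx_0)^\top P(A_Kx_0)$ with respect to $K$. One then unrolls the resulting recursion along the trajectory $x_{t+1}=A_Kx_t$ to obtain $2(RK+B^\top PA_K)\sum_{t\ge0}x_tx_t^\top$. Finally, one averages over an initial state with $\mathbb{E}[x_0x_0^\top]=I_n$, so that the accumulated state correlation becomes $\Sigma$.

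You never introduce trajectories or a random initial state. Instead you use the adjoint (trace-duality) relation between the two Lyapunov operators twice: once to write $C(K)=\mathrm{Tr}(P)$, and once to turn $\mathrm{Tr}(dP)$ into $\mathrm{Tr}(M\Sigma)$, with implicit differentiation of \eqref{equ:Lyap} in between.

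Both arguments rest on the same fact, $\Sigma=\sum_{k\ge0}A_K^k(A_K^\top)^k$; your duality identity is exactly what is hidden in the ``take expectations'' step of the trajectory argument. Your version has two advantages. It works directly with the deterministic $\mathcal{H}_2$ definition \eqref{equ:transfer}--\eqref{equ:Sigma} used in this paper. It also makes the smoothness of $C$ on the open set $\mathcal{S}$ explicit, through rationality of the Lyapunov solutions. The recursive argument leaves implicit both the differentiability of $P$ and the vanishing of the tail of the unrolled recursion. The trajectory argument, in turn, exhibits $\nabla C(K)$ as an accumulation of per-step gradients along closed-loop rollouts. That interpretation is what connects the formula to sample-based gradient estimation in the reinforcement-learning literature.
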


By Lemma \ref{lem:mb_pg}, computation of the gradient requires model parameters $(A,B)$ and the solution of two Lyapunov equations \eqref{equ:Sigma} and \eqref{equ:Lyap}. While the {nominal} LQR cost $C(K)$ is non-convex in the feedback gain $K$, it satisfies a gradient dominance property, leading to linear convergence of the policy update \eqref{equ:grad} to $K^*$ under a proper stepsize and an initial stabilizing gain~\cite{fazel2018global}. 
 

Next, we shift our attention from the LTI system \eqref{equ:sys} to linear time-varying (LTV) systems with unknown and unpredictable model parameters, and use policy gradient methods for adaptive control design.

	\section{Problem formulations}
	This section formulates the {adaptive control} problem of two classes of time-varying systems: slowly time-varying systems with small continuous variations and piecewise-constant LTV systems.  
	\subsection{Adaptive control of slowly time-varying systems}
	We first consider a slowly time-varying system
  	\begin{equation}\label{equ:ltv_sys}
  		x_{t+1} = A_tx_t + B_tu_t + w_t,
  	\end{equation}
  	where the system matrices $A_t$ and $B_t$ are unknown, may vary at any time, and satisfy the following controllability assumption commonly adopted in the literature \cite{rotulo2022online, liu2023online}.
  	\begin{assum}[Frozen-time controllability]\label{assum:control}
  		For every $t\in \mathbb{N}$, the pair $(A_t, B_t)$ {is} controllable.
  	\end{assum}
  	
  	We also assume that $\{(A_t, B_t)\}$ do not drift to infinity, and the process noise $w_t$ is uniformly bounded.
  	\begin{assum}[Bounded dynamics and noise]\label{ass:bounded}
  		There exist constants $a_m$ and $b_m$ such that  $\|A_t\|\leq a_m$ and  $\|B_t\|\leq b_m, \forall t\in \mathbb{N}$. Moreover, there exists a constant $w_m\geq 0$ such that $\|w_t\|\leq w_m$ for all $t\in\mathbb{N}$.
  	\end{assum} 
  	
  	This assumption can be satisfied if $(A_t, B_t)$ belong to a compact set and is mild in practice.
  	
  	Denote the variation of system matrices when the dynamical model changes from $(A_{t-1}, B_{t-1})$ to $(A_t, B_t)$ as
  	\begin{equation}\label{equ:variation}
  		\Delta_t:= \begin{bmatrix}
  			B_{t-1}- B_t & A_{t-1}-A_t
  		\end{bmatrix}.
  	\end{equation} 
  	Then, we assume that the variation of $(A_t, B_t)$ has a uniform bound \cite{liu2023online}.
  	
  	\begin{assum}[Slow variation]\label{assum:variation}
  		There exists a constant $\delta > 0$ such that $\|\Delta_t\| \leq \delta, \forall t \in \mathbb{N}_+$.
  	\end{assum}

  	 
Compared with existing LTV control methods that typically assume known
time-varying models
\cite{qu2021stable,tanaskovic2019adaptive}, our setting is
fully data-driven and adaptive. In particular, our goal is to design a lightweight
online policy adaptation method that uses recent trajectory data to follow
model variations while maintaining stability of the closed-loop LTV system
\cite{liu2023online}. Since the presence of process noise and model variations
generally prevents convergence to the origin, we adopt the following standard
notion of practical exponential stability (PES), which is closely related to
practical stability and uniform ultimate boundedness in nonlinear control
\cite{khalil2002nonlinear,lakshmikantham1990practical,teel1995tools}.

\begin{definition}[Practical exponential stability]
	The LTV system~\eqref{equ:ltv_sys} is said to be practically exponentially
	stable if there exist constants $c>0$, $\rho\in(0,1)$, and $r\geq 0$ such
	that, for any $t\geq s\geq 0$,
	\[
	\|x_t\|\leq c\rho^{t-s}\|x_s\|+r.
	\] 
\end{definition}

\begin{problem}\label{problem: 1}
	Design an adaptive control algorithm for the unknown LTV system
	\eqref{equ:ltv_sys}, using only online closed-loop data, such that the
	resulting closed-loop system is PES and the controller tracks the
	frozen-time optimal LQR policy.
\end{problem}

  	\subsection{Adaptive control of piecewise-constant LTV systems}
  	
  	We next consider a second class of time-varying dynamics, namely the piecewise-constant LTV system
  	\begin{equation}\label{equ:switch}
  		x_{t+1} = A_i x_t + B_i u_t + w_t, \quad T_{i} \leq t < T_{i+1},~~ \text{for} ~i \in \mathbb{N}, 
  	\end{equation} 
  	where $(A_i, B_i)$ are unknown, and $\{T_i\}$ are the unknown switching instants when the dynamics jump from $(A_{i-1}, B_{i-1})$ to $(A_i, B_i)$. As in Assumption \ref{assum:variation}, we also assume that the jump is bounded.
  	\begin{assum}[{Bounded switching jumps}]\label{assum:switch_jump}
  		{There exists $\delta>0$ such that}
  		\[
  			{\|[B_i,A_i]-[B_{i-1},A_{i-1}]\|\leq \delta,\qquad \forall i\in\mathbb{N}_+.}
  		\]
  	\end{assum}
  	
  	
  	The reason  we regard the piecewise-constant system control as a different setting is that it introduces different challenges compared with
  	the slowly time-varying case in Section~3.1. Specifically at a switching instant, the
  	model variation  in Assumption \ref{assum:switch_jump}  can be larger, and hence the closed-loop matrices may change
  	abruptly, leading to {a completely different analysis}. For example, we usually require an additional {dwell-time} condition to ensure stability. 
  	
  	
  	\begin{definition}[dwell time]\label{def:dwell}
  		We refer to an integer $\tau$ as the dwell time, if for all $i\in \mathbb{N}$, it holds that $T_{i+1} - T_i \geq \tau$.
  	\end{definition}
 
 	The dwell time is used to let the state decrease enough after a switch. In this setting, it is usually difficult to prove exponential decay of the state with respect to {time steps}. Instead, we focus on PES with respect to the switching indices.
  	
  	\begin{definition}[Interval-wise PES]
  		The piecewise-constant system \eqref{equ:switch} is said to be interval-wise practically
  		exponentially stable if there exist constants $c>0$, $\rho\in(0,1)$,
  		and $r\geq 0$ such that
  		\[
  		\|x_{\max,i}\|\leq c\rho^i\|x_{T_0}\|+r,\qquad \forall i\in\mathbb{N},
  		\] 
  		where $\|x_{\max,i}\|:= \max_{T_i\leq t\leq T_{i+1}} \|x_t\|$.
  	\end{definition}
  	
  	Our problem formulation for the piecewise-constant LTV system \eqref{equ:switch} is as follows.
  	
  	\begin{problem}\label{problem: 2}
  		Design an adaptive control algorithm for \eqref{equ:switch}, using only
  		online closed-loop data, such that the closed-loop system achieves
  		interval-wise PES and the controller tracks the
  		frozen-time optimal LQR policy.
  	\end{problem}
  	
  	Next, we propose a unified adaptive control algorithm to solve Problems \ref{problem: 1} and \ref{problem: 2} by leveraging policy gradient methods.  
	
	\section{{Policy gradient adaptive control for LTV systems}}
	In this section, we first propose the policy gradient adaptive control algorithm for LTV systems. Then, we provide {stability certificates and average frozen-time optimality-gap bounds} of Algorithm \ref{algo2_switch} for slowly time-varying systems \eqref{equ:ltv_sys} and piecewise-constant LTV systems \eqref{equ:switch}, respectively.  
	
	\subsection{The policy gradient adaptive control method for LTV systems}

	\begin{figure}[t]
		\centerline{\includegraphics[width=65mm]{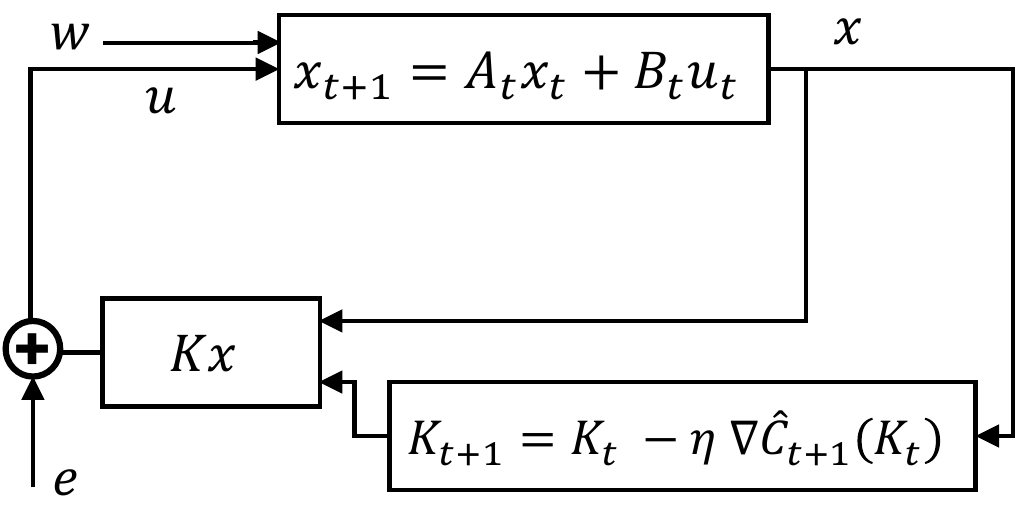}}
		\caption{Illustration of the policy gradient adaptive control algorithm.}
		\label{pic:diagram}
	\end{figure}
	
	\begin{algorithm}[t]
		\caption{Policy gradient adaptive control for linear time-varying systems with sliding window data}
		\label{algo2_switch}
		\begin{algorithmic}[1]
			\State \textbf{Initialize:} Offline data \((X_{t_0}, U_{t_0}, X_{t_0+1})\), {a policy \(K_{t_0}\) stabilizing the frozen-time pair \((A_{t_0},B_{t_0})\)}, and a stepsize \(\eta\).
			\For{ \(t = t_0, t_0 + 1, \ldots\) }
			\State Apply \(u_t = K_t x_t + e_t\), {and then} observe \(x_{t+1}\).
			\State Estimate a model using $(X_{t+1}, U_{t+1}, X_{t+2})$ and {normalized} least-squares
			\[
			{[\hat{B}_{t+1}, \hat{A}_{t+1}] = \mathop{\arg\min}_{B, A} \| \bar X_{t+2} - [B, A]\bar D_{t+1} \|_F.}
			\]
			\State Perform one-step policy gradient descent
			\begin{equation}\label{equ:mbgd}
				K_{t+1} = K_t - \eta \nabla \hat{C}_{t+1}(K_t),    
			\end{equation} 
			where \(\nabla \hat{C}_{t+1}(K_t)\) is the policy gradient with the estimated model \((\hat{A}_{t+1}, \hat{B}_{t+1})\).
			\EndFor
		\end{algorithmic}
	\end{algorithm}
	
	Our adaptive control algorithm is illustrated in Fig. \ref{pic:diagram} and detailed in Algorithm~\ref{algo2_switch}. We assume that the initial policy $K_{t_0}$ stabilizes the frozen-time pair $(A_{t_0},B_{t_0})$, which is standard in adaptive linear quadratic control literature~\cite{mania_certainty_2019,lu2023almost,simchowitz2020naive,cohen2019learning}.
	The control policy is in the form of linear state-feedback $u_t = K_t x_t + e_t$, where $e_t$ is the probing signal.
	Since the system is time-varying, at each time step we use only the most recent batch of online closed-loop data for system identification. Define the $L$-long sliding window data collected at time $t$
	\begin{equation}\label{equ:sliding_data}
		\begin{aligned}
			X_t &:= \begin{bmatrix}
				x_{t-L} & {x_{t-L+1}}& \dots& x_{t-1}
			\end{bmatrix}\in \mathbb{R}^{n\times L},\\
			U_t &:= \begin{bmatrix}
				u_{t-L} & {u_{t-L+1}}& \dots& u_{t-1}
			\end{bmatrix}\in \mathbb{R}^{m\times L},\\
			W_t &:= \begin{bmatrix}
				w_{t-L} & {w_{t-L+1}}& \dots& w_{t-1}
			\end{bmatrix}\in \mathbb{R}^{n\times L},\\
			X_{t+1} &:= \begin{bmatrix}
				x_{t-L+1} & {x_{t-L+2}}& \dots& x_{t}
			\end{bmatrix}\in \mathbb{R}^{n\times L},\\
			D_t &:= [U_t^{\top}, X_t^{\top}]^{\top}\in \mathbb{R}^{(m+n)\times L},
		\end{aligned}
	\end{equation}

	We estimate the current operating model using the normalized least-squares method. Define the normalized regression quantities
	\begin{align*}
		&d_s=[u_s^\top,x_s^\top]^\top,~
		n_s=\sqrt{1+\|d_s\|^2},~
		\bar d_s=\frac{d_s}{n_s}, \\
		&N_t:=\mathrm{diag}(n_{t-L},n_{t-L+1},\ldots,n_{t-1}),\\
		&\bar D_t:=D_tN_t^{-1},\qquad
		\bar X_{t+1}:=X_{t+1}N_t^{-1}.
	\end{align*}
	 
	Then, we identify a dynamical model that best fits the data matrices \eqref{equ:sliding_data} as
	\begin{equation}\label{equ:sysid}
		{[\hat{B}_t, \hat{A}_t] = \underset{B, A}{\arg \min }\left\|\bar X_{t+1}-[B,A]\bar D_t\right\|_F = \bar X_{t+1}\bar D_t^{\dagger}.}
	\end{equation}
		To ensure the uniqueness of the solution \eqref{equ:sysid}, we let the length of the input-state data satisfy $L\geq m+n$ and $D_t$ be persistently exciting (PE) by proper design of the probing signal. Specifically, the probing signal $e_t$ satisfies the following assumption. 
	\begin{assum}[Bounded and normalized PE]
		\label{assum:pe}
		The probing signal $e_t$ is chosen such that there {exist} constants $e_m>0$ and $\gamma>0$ such that
		\begin{equation}\label{equ:quant_pe}
			\underline{\sigma}\left(\frac{1}{L}\bar D_t\bar D_t^\top\right)
			\geq  \gamma^2,
			\qquad \forall t\ge t_0,
		\end{equation} 
		and
		\begin{equation}\label{equ:bund_prob}
			\|e_t\|\le e_m,\qquad \forall t\ge 0.
		\end{equation} 
	\end{assum}
	
	Assumption \ref{assum:pe} ensures a quantitative condition of persistency of excitation for our analysis and can be ensured by choosing bounded white noise. Note that the identification step in Algorithm \ref{algo2_switch} can be efficiently conducted by recursive rank-one update; we refer to \cite{zhao2025policy} for details.

	\begin{remark}[ {Why not ordinary least-squares}]\label{rem:nls}
		Our prior work \cite{zhao2025policy,zhao2024data} uses the ordinary least-squares method for model estimation, which cannot be adopted for LTV systems due to a circular dependency in adaptive control. Let us elaborate. It has been shown that the {estimation} error of ordinary least-squares for LTV systems depends on the condition number of the input-state matrix $D_t$ \cite{mareels1988persistency}. Thus, one needs a state bound to control $\|D_t\|$ to achieve a small identification error, and only afterwards can one prove the desired state bound. In comparison, normalization breaks this loop: the {estimation} error of normalized least-squares does not depend on $\|D_t\|$, and hence we adopt it in our method.
	\end{remark}

	Following the certainty-equivalence principle~\cite{dorfler2021certainty}, we treat $(\hat{A}_t, \hat{B}_t)$ estimated from windowed data as the ground-truth parameters at time $t$, and the corresponding LQR cost for the frozen-in-time system is
	\begin{equation}\label{prob:indirect} 
		\hat{C}_t(K) = \text{Tr}\left((Q+K^{\top}RK)\Sigma\right), 
	\end{equation}
	where $\Sigma$ is the unique positive definite solution to the following Lyapunov equation
	\begin{equation}\label{equ:indirect_sigma}
		\Sigma = I_n + \left(\hat{A}_t + \hat{B}_tK\right)\Sigma \left(\hat{A}_t + \hat{B}_tK\right)^{\top}.
	\end{equation} 	
	
	\begin{remark}[Certainty equivalence]\label{rem:ce}
		In this paper, certainty equivalence refers to the frozen-time LQR problem \eqref{prob:indirect}-\eqref{equ:indirect_sigma} built from the windowed least-squares estimate \eqref{equ:sysid}. Thus, the policy is updated toward the LQR gain of a local frozen-time surrogate, rather than toward a globally optimal controller for the full LTV trajectory. This surrogate is useful because the dynamics vary slowly within the window in \eqref{equ:ltv_sys}, or remain constant between switches in \eqref{equ:switch}, so the frozen-time LQR gradient provides a local descent direction for tracking the time-varying stabilizing policy.
	\end{remark}
	
	It is well-known in the LTI setting that the optimal LQR gain has guaranteed stability margins \cite{safonov1977gain}, and hence the optimal certainty-equivalence LQR gain of \eqref{prob:indirect}-\eqref{equ:indirect_sigma} may also stabilize the ground-truth system $(A_t, B_t)$. Motivated by this fact, we update the policy towards the certainty-equivalence LQR gain {in feedback} using online closed-loop data. Specifically, we 
	use the policy gradient method of the certainty-equivalence LQR cost \eqref{prob:indirect}-\eqref{equ:indirect_sigma} to update the policy, where the gradient is computed from \eqref{equ:gradK}--\eqref{equ:Lyap} with the true system matrices $(A,B)$ replaced by their windowed estimates $(\hat A_t,\hat B_t)$.


	\subsection{Certificates for slowly time-varying systems \eqref{equ:ltv_sys}} 
	
	A major challenge of theoretical analysis stems from the coupling between learning and control in Algorithm \ref{algo2_switch}: the controller is updated from online closed-loop data, while both the unknown system dynamics and the data used for identification vary over time. Moreover, inaccurate policy updates may destroy
	closed-loop stability, whereas overly conservative updates may fail to track the drift of the dynamics.

	We first quantify the model identification error of the normalized least-squares \eqref{equ:sysid} given data $(X_t, U_t, X_{t+1})$.
	\begin{lemma}\label{lem:id_error}
		Consider the {normalized} least-squares problem \eqref{equ:sysid} for the LTV system \eqref{equ:ltv_sys}, and let Assumptions \ref{assum:control}, \ref{ass:bounded}, \ref{assum:variation}, and \ref{assum:pe} hold.
		Then, for all $t\geq t_0$ it holds that
		\begin{equation}\label{equ:id_error}
			\|[\hat{B}_t, \hat{A}_t] - [B_t, A_t]\| \leq \frac{L\delta}{\gamma} + \frac{w_m}{\gamma}. 
		\end{equation} 
	\end{lemma}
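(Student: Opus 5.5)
Write $\Theta_t:=[B_t,A_t]$ and $\hat\Theta_t:=[\hat B_t,\hat A_t]$. The plan is to express $\bar X_{t+1}$ exactly through the data and the true parameters, and then exploit the structure of the pseudoinverse. For each sample $s\in\{t-L,\dots,t-1\}$ in the window we have $x_{s+1}=\Theta_s d_s+w_s$. We rewrite this as
\[
x_{s+1}=\Theta_t d_s+(\Theta_s-\Theta_t)d_s+w_s .
\]
Dividing each column by $n_s$ and stacking the columns gives
\[
\bar X_{t+1}=\Theta_t\bar D_t+E_t+W_tN_t^{-1},
\]
where $E_t$ is the matrix whose $s$-th column is $(\Theta_s-\Theta_t)\bar d_s$.

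By Assumption~\ref{assum:pe}, $\bar D_t$ has full row rank, so $\bar D_t\bar D_t^\dagger=I$. It follows that
\[
\hat\Theta_t-\Theta_t=(E_t+W_tN_t^{-1})\bar D_t^\dagger ,
\]
and the estimation error splits into a drift term and a noise term.

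\textbf{Bounding the pseudoinverse.} We have $\|\bar D_t^\dagger\|=1/\underline{\sigma}(\bar D_t)$. Assumption~\ref{assum:pe} gives $\underline\sigma(\bar D_t)\ge\sqrt L\,\gamma$, hence $\|\bar D_t^\dagger\|\le 1/(\sqrt L\gamma)$.

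\textbf{Noise term.} Since $n_s\ge1$, each column of $W_tN_t^{-1}$ has norm at most $w_m$ by Assumption~\ref{ass:bounded}. Therefore
\[
\|W_tN_t^{-1}\|\le\|W_tN_t^{-1}\|_F\le\sqrt L\,w_m ,
\]
and the noise contribution is at most $w_m/\gamma$.

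\textbf{Drift term.} Telescoping with Assumption~\ref{assum:variation} gives $\|\Theta_s-\Theta_t\|\le\sum_{k=s+1}^{t}\|\Delta_k\|\le (t-s)\delta\le L\delta$. Using $\|\bar d_s\|\le 1$, each column of $E_t$ has norm at most $L\delta$. Hence $\|E_t\|\le\sqrt L\cdot L\delta$, and the drift contribution is at most $L\delta/\gamma$. Adding the two contributions yields \eqref{equ:id_error}.

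\textbf{Main obstacle.} The step requiring most care is the drift term, specifically checking the indices. The window at time $t$ uses samples $s=t-L,\dots,t-1$, and the transition $x_s\to x_{s+1}$ is governed by $\Theta_s$. The gap $t-s$ therefore ranges over $1,\dots,L$, and the crude bound $L\delta$ suffices. A sharper summation, $\sqrt{\sum_s (t-s)^2}\,\delta\sim L^{3/2}\delta/\sqrt3$, would only improve the constant, so we do not need it. The normalization is essential here: it makes every column bound independent of $\|x_s\|$, which is exactly the point of Remark~\ref{rem:nls}.
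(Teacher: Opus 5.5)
Your proposal is correct and follows the paper's proof essentially step for step. It uses the same decomposition $\bar X_{t+1}=[B_t,A_t]\bar D_t+\bar V_t+\bar W_t$, the same pseudoinverse bound $\|\bar D_t^\dagger\|\le 1/(\sqrt L\gamma)$, and the same Frobenius-norm bounds on the drift and noise terms. The only cosmetic difference is that you bound every drift column by $L\delta$ directly, while the paper bounds the $i$-th column by $i\delta$ and then uses the crude estimate $\sum_{i=1}^L i^2\le L^3$; both arrive at $\delta L^{3/2}$.
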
  
   
	 By Lemma \ref{lem:id_error}, the upper bound of the {estimation} error consists of two terms induced by the variation of the dynamics and process noise, respectively. In particular, the first one scales linearly with the variation of the dynamics $\delta$ and the window size $L$, which is lower bounded by $m+n$ for identifiability. The second {one} is linear in the uniform noise bound $w_m$. {Both terms are} inversely proportional to the normalized excitation level $\gamma$ in \eqref{equ:quant_pe}. 
	  
	 Next, we provide theoretical guarantees on the stability and frozen-time optimality. Since the closed-loop matrix $A_t + B_t K_t$ is time-varying, we adapt the notion of sequential stability from~\cite{cohen2018online} to the LTV setting.
	  
	 \begin{definition}[{Sequential stability}] \label{def:sss}
	 	A sequence of policies $\{K_t\}$ for the LTV system \eqref{equ:ltv_sys} is sequentially stable if there exist constants $\kappa \geq 1$, $0< \alpha \leq 1$, and matrices $\{H_t\succ 0\}$ and $\{L_t\}$ for $t \geq t_0$, such that $A_t+B_tK_t = H_tL_tH_t^{-1}$, and for all $t$, it holds that
	 	\begin{enumerate}[label = (\roman*)]
	 		\item $\|L_t\|\leq 1 - \alpha$ and $\|K_t\|\leq \kappa$;
	 		\item $\|H_t\|\leq \kappa$ and $\|H_t^{-1}\|\leq 1$;
	 		\item $\|H_{t+1}^{-1}H_t\| \leq 1 + \alpha/2$.
	 	\end{enumerate}
	 \end{definition}
	 
	By Definition \ref{def:sss}, if the closed-loop matrix $A_t + B_tK_t$ is frozen-time stable and changes slowly, then the system is sequentially stable, which further leads to practical exponential stability~\cite{cohen2018online}. The key challenge in analyzing Algorithm \ref{algo2_switch} is how to select the stepsize to ensure the aforementioned conditions and the improvement of the policy with perturbed gradient. 
	  
	 For notational simplicity, denote the identification error in \eqref{equ:id_error} by
	 \begin{equation}\label{equ:epsilon}
	 	{\epsilon:=\frac{L\delta+w_m}{\gamma}.}
	 \end{equation}
	  Then, our main results for the slowly time-varying system \eqref{equ:ltv_sys} are {as follows}.
	  
 	\begin{theorem}\label{thm:indirect}
 		Consider Algorithm \ref{algo2_switch} for the system \eqref{equ:ltv_sys}, and Assumptions \ref{assum:control}, \ref{ass:bounded}, \ref{assum:variation}, and \ref{assum:pe} hold. Then, there exist constants {$\nu_i>0,i\in\{1,2,\dots,9\}$} with $\nu_6 <1$ depending on $(a_m,b_m,Q,R,K_{t_0})$, such that, if 
 		$$
 		\delta \leq \nu_1,~ {\epsilon \leq \nu_2},~
 		\eta \leq \min\left\{\nu_3, {\frac{\nu_4}{\epsilon}}\right\}, 
 		$$ 
 		 then the sequence $\{K_t\}$ is sequentially stable, and the state is bounded as
		\begin{equation}\label{equ:bound_state}
			{\begin{aligned}
			\|x_t\| \leq{}& \nu_5\left(1-\frac{\nu_6}{2}\right)^{t-t_0}\|x_{t_0}\|\\
			&+\frac{2  \nu_5}{\nu_6}(b_me_m+w_m),
			\qquad \forall t\geq t_0 .
			\end{aligned}}
		\end{equation} 
		{Moreover, for any horizon $T>t_0$, the average frozen-time optimality gap satisfies}
		\begin{equation}\label{equ:regret_ltv}
			{
			\frac{1}{T-t_0}\sum_{t=t_0}^{T-1}\big(C_t(K_t)-C_t^*\big)
			\leq
			\frac{\nu_7C_{t_0}(K_{t_0})}{\eta(T-t_0)}
			+\frac{\nu_8\delta}{\eta}
			+\nu_9\epsilon .}
		\end{equation}
 	\end{theorem}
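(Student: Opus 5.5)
The plan is an induction on $t$ over a frozen-time sublevel set. We show that every policy $K_t$ stays in
\[
\mathcal{S}_t:=\{K:\ C_t(K)\le \bar C\},\qquad \bar C:=c_0\, C_{t_0}(K_{t_0}),
\]
where $C_t$ is the frozen-time LQR cost of $(A_t,B_t)$ and $c_0>1$ is a constant. On such a set, standard LQR facts give uniform bounds depending only on $(a_m,b_m,Q,R,\bar C)$:
\[
\|\Sigma_K\|\le \bar C/\underline\sigma(Q),\qquad \|P_K\|\le \bar C,\qquad \|K\|\le \sqrt{\bar C/\underline\sigma(R)} .
\]
On the same set the cost has Lipschitz gradients and $\|\nabla C_t(K)\|$ is uniformly bounded. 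Two further facts are needed: the frozen-time cost is Lipschitz in the model parameters (perturbation bounds of Fazel et al. type), and the gradient dominance property $C_t(K)-C_t^*\le \mu\|\nabla C_t(K)\|_F^2$ holds.

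\textbf{Inductive step.} Suppose $C_t(K_t)\le\bar C$. By Lemma~\ref{lem:id_error}, $\|[\hat B_{t+1},\hat A_{t+1}]-[B_t,A_t]\|\le \epsilon+\delta$. With $\epsilon\le\nu_2$ and $\delta\le\nu_1$ small enough, the model perturbation bounds give
\[
\|\nabla\hat C_{t+1}(K_t)-\nabla C_t(K_t)\|\le c_1\epsilon .
\]
The descent lemma with $\eta\le\nu_3$ then yields
\[
C_t(K_{t+1})\le C_t(K_t)-\tfrac{\eta}{2}\|\nabla C_t(K_t)\|^2+c_2\eta\epsilon^2 .
\]
Combined with gradient dominance this gives the contraction
\[
C_t(K_{t+1})-C_t^*\le (1-\eta/(2\mu))(C_t(K_t)-C_t^*)+c_2\eta\epsilon^2 .
\]
Passing from $C_t$ to $C_{t+1}$ at the fixed $K_{t+1}$ costs at most $c_3\delta$, by the same Lipschitz-in-model bound (valid since $\delta\le\nu_1$). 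Hence
\[
C_{t+1}(K_{t+1})\le \max\{C_t(K_t),\,\text{a level set by }\epsilon^2\mu+\delta\mu/\eta\}
\]
up to constants. The smallness conditions on $\delta$ and $\epsilon$ keep this inside $\bar C$, which closes the induction.

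\textbf{Sequential stability.} For each $t$ take $P_t$ to be the frozen-time Lyapunov solution, which satisfies $A_{K,t}^\top P_tA_{K,t}\preceq P_t-\underline\sigma(Q)I$. Set
\[
H_t=P_t^{-1/2},\qquad L_t=H_t^{-1}(A_t+B_tK_t)H_t .
\]
Then $\|L_t\|^2\le 1-\underline\sigma(Q)/\|P_t\|\le 1-\underline\sigma(Q)/\bar C$, which yields $\alpha$ and hence $\nu_6$. Also $P_t\succeq Q$; after rescaling so that $\|H_t^{-1}\|\le1$, we get $\|H_t\|\le\kappa$. For condition (iii), use
\[
\|K_{t+1}-K_t\|\le \eta(\|\nabla C_t\|+c_1\epsilon)\le\eta G+\nu_4 .
\]
Together with $\|\Delta_{t+1}\|\le\delta$ and Lipschitz continuity of $P$ in $(A,B,K)$, this gives $\|P_{t+1}-P_t\|\le c_4(\eta+\nu_4+\delta)$. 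The requirement $\eta\le\nu_4/\epsilon$ bounds the perturbation part $\eta c_1\epsilon\le c_1\nu_4$; choosing $\nu_3,\nu_4,\nu_1$ small then makes $\|H_{t+1}^{-1}H_t\|\le 1+\alpha/2$. With sequential stability in hand, \eqref{equ:bound_state} follows from the standard argument. Unroll
\[
x_{t+1}=(A_t+B_tK_t)x_t+B_te_t+w_t
\]
and bound the product of closed-loop matrices by
\[
\Big\|\prod_s (A_s+B_sK_s)\Big\|\le \kappa(1+\alpha/2)^{k}(1-\alpha)^{k}\le\kappa(1-\alpha/2)^k ,
\]
then sum the geometric series of the input $b_me_m+w_m$, which gives the factor $2\nu_5/\nu_6$.

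\textbf{Optimality gap.} Rearrange the descent inequality as
\[
\tfrac{\eta}{2}\|\nabla C_t(K_t)\|^2\le C_t(K_t)-C_{t+1}(K_{t+1})+c_3\delta+c_2\eta\epsilon^2 ,
\]
and apply gradient dominance to get $C_t(K_t)-C_t^*\le\mu\|\nabla C_t\|^2$. Summing over $t$ telescopes the cost differences, leaving
\[
\frac{2\mu}{\eta(T-t_0)}C_{t_0}(K_{t_0})+\frac{2\mu c_3\delta}{\eta}+2\mu c_2\epsilon^2 .
\]
Since $\epsilon\le\nu_2$, the last term is bounded by $\nu_9\epsilon$, which matches \eqref{equ:regret_ltv}.

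The main obstacle is the inductive invariance. The three sources of drift must be balanced simultaneously: the gradient error $\epsilon$, the model drift $\delta$ that moves the cost landscape, and the stepsize that must be small for condition (iii) yet large enough for descent to beat the $\delta$ drift. This balance is exactly where the conditions $\eta\le\nu_4/\epsilon$ and $\delta\le\nu_1$ enter. I would first fix $\bar C$, then choose $\nu_3$ from smoothness, then $\nu_1,\nu_2,\nu_4$ small relative to $\eta/\mu$ and $\alpha$.
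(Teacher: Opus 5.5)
Your plan follows the paper's architecture. It runs an induction keeping $C_t(K_t)$ below a uniform level, uses uniform constants on that sublevel set, and gets sequential stability from a frozen-time Lyapunov similarity transform plus a perturbation bound for condition (iii). It then applies the Cohen-type input-to-state estimate and a telescoping sum for the gap.

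The local variations are sound. Your inexact-gradient descent lemma gives a per-step bias of order $\eta\epsilon^2$. This is sharper than the paper's $\eta p_3p_6\epsilon$ from Lemma~\ref{lem:bias}, which compares the exact and estimated steps through Lipschitz continuity of $C_t$ in $K$. It does need smoothness along the whole segment $[K_t,K_{t+1}]$, so $\eta(G+c_1\epsilon)$ must be small there too, not only in (iii). Using $H_t\propto P_t^{-1/2}$ instead of the paper's $H_t=\Sigma_t^{1/2}$ also works. The paper gets $\|H_t^{-1}\|\le 1$ for free from $\Sigma_t\succeq I_n$. You need a time-uniform rescaling by $\sqrt{\bar C}$, which is harmless because it cancels in $L_t$ and $H_{t+1}^{-1}H_t$.

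Two small repairs are needed.
\begin{enumerate}
\item Comparing $\nabla\hat C_{t+1}$ with $\nabla C_t$ costs $O(\epsilon+\delta)$, not $O(\epsilon)$. This is $O(\epsilon)$ only because $\|\bar d_s\|<1$ forces $\gamma<1$, hence $\delta\le L\delta/\gamma\le\epsilon$.
\item Your $\bar C=c_0C_{t_0}(K_{t_0})$ must dominate $\sup_t C_t^*$ plus the steady-state level. Smallness of $\delta$ and $\epsilon$ cannot provide this. You have to choose $c_0$ against a uniform bound $\overline{C^*}$, which the paper builds into its $\overline{C}$.
\end{enumerate}

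The substantive point is the last sentence of your plan. Choosing $\nu_1$ small relative to $\eta/\mu$ means you prove the result under a coupling $\delta\le c\,\eta$. The statement instead has $\delta\le\nu_1$ with $\nu_1$ depending only on $(a_m,b_m,Q,R,K_{t_0})$, and $\eta$ free below $\min\{\nu_3,\nu_4/\epsilon\}$. You should state this coupling explicitly rather than claim the conditions match.

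This is not a weakness relative to the paper: the induction in Lemma~\ref{lem:bounded} needs the same coupling.
\begin{itemize}
\item The step $\frac{\eta\bar p_{10}\delta}{2\bar\mu}+\bar p_{10}\delta\le\frac{\eta}{2\bar\mu}$ is equivalent to $\delta\le \eta/(\bar p_{10}(\eta+2\bar\mu))$.
\item The listed condition $\delta\le 1/(\bar p_{10}(1+2\bar l\bar\mu))$ implies this only if $\eta\ge 1/\bar l$. Given the constraint $\eta\le 1/\bar l$, that means only at $\eta=1/\bar l$.
\item For smaller $\eta$, the per-step drift $\bar p_{10}\delta$ is not absorbed by the descent of order $\eta/\bar\mu$. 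This is exactly the balance you flagged, and the $\nu_8\delta/\eta$ term in the gap bound reflects it.
\item With $\eta\to 0$ at fixed $\delta$, the policy essentially freezes while the plant can drift a non-small distance. So an $\eta$-independent bound on $C_t(K_t)$, and hence on $\kappa$, cannot be expected from this descent-versus-drift argument.
\end{itemize}
Your version, with $\delta\le c\,\eta/\bar\mu$ stated as a hypothesis, is the correct form of the result.
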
 
 	
 	We make several remarks on Theorem \ref{thm:indirect}.
 	
 	First, the theorem requires the dynamics variation level $\delta$, the identification error $\epsilon$, and the stepsize $\eta$ to be sufficiently small. By \eqref{equ:epsilon}, the identification error consists of two components induced by the system variation and process noise, respectively. Therefore, the condition $\epsilon\leq \nu_2$ can be interpreted as a requirement that the variation of the dynamics and the noise level are sufficiently small relative to the excitation level $\gamma$. Moreover, the admissible stepsize decreases as the identification error increases through the condition $\eta \leq \nu_4/\epsilon$, reflecting the fact that larger model uncertainty requires more conservative policy updates.
 	
 	Second, under these conditions, the sequence of adaptive policies remains sequentially stable throughout the learning process. Consequently, the closed-loop system achieves practical exponential stability. In particular, \eqref{equ:bound_state} shows that the state trajectory consists of an exponentially decaying transient term and a residual term determined by the probing signal and process noise. Therefore, in the disturbance-free case with $e_m=w_m=0$, the state converges exponentially to the origin despite the continuous adaptation of both the controller and the model estimate.
 	
 	Third, \eqref{equ:regret_ltv} characterizes the frozen-time tracking performance of Algorithm~\ref{algo2_switch}. The average optimality gap is composed of three terms. The first term decreases as $\mathcal{O}(1/(\eta T))$ and represents the transient effect of the initial controller. The second term scales linearly with the variation level $\delta$, quantifying the intrinsic tracking error caused by the drift of the underlying dynamics. The third term is proportional to the identification error $\epsilon$ and captures the effect of imperfect model estimation. Consequently, when the system is time invariant and noise free, i.e., $\delta=\epsilon=0$, the average optimality gap converges to zero at the rate $\mathcal{O}(1/T)$.
 	
 	Finally, compared with existing adaptive control methods for LTV systems that repeatedly solve certainty-equivalent control problems based on estimated models, Algorithm~\ref{algo2_switch} updates the controller using lightweight first-order policy adaptation. As a result, the policy variation between consecutive updates can be directly controlled through the stepsize, which enables continuous closed-loop adaptation while preserving stability and tracking performance. {Moreover, Algorithm~\ref{algo2_switch} is computationally efficient in the sense that the policy update requires only computing a gradient. while computing the policy gradient still requires solving the Lyapunov equations in \eqref{equ:Sigma} and \eqref{equ:Lyap} with the estimated model, it is much cheaper than recomputing the certainty-equivalence minimizer through a Riccati equation or an SDP at every update as in \cite{qu2021stable, rotulo2022online}. }

\subsection{Certificates for piecewise-constant LTV systems \eqref{equ:switch}}

Compared with slowly varying systems \eqref{equ:ltv_sys}, piecewise-constant LTV systems \eqref{equ:switch} introduce additional challenges in both control and analysis. In the slowly varying case, the bounded variation assumption ensures that consecutive closed-loop matrices remain sufficiently close, which enables the sequential stability analysis and further leads to practical exponential stability in Theorem \ref{thm:indirect}. In contrast, abrupt parameter jumps in piecewise-constant LTV systems \eqref{equ:switch} may destroy this continuity property, causing the sequential stability condition to fail even when each frozen-time closed-loop system is individually stable. Consequently, the practical exponential stability guarantees established for slowly varying systems cannot be directly extended to the piecewise-constant setting.

As in Lemma \ref{lem:id_error}, we first quantify the model identification error for the system \eqref{equ:switch} with dwell time $\tau$. During the sliding window with length $L$, the maximal number of {switches} is $N = \lceil L/\tau \rceil$. Then, we have the following results.
 
\begin{lemma}\label{lem:id_error_sw}
	Consider the {normalized} least-squares problem \eqref{equ:sysid} {for the piecewise-constant LTV system \eqref{equ:switch}. Suppose that Assumptions \ref{assum:control}, \ref{ass:bounded}, \ref{assum:switch_jump}, and \ref{assum:pe} hold.} 
	Then, for $T_i \leq t < T_{i+1},~ \forall i\in \mathbb{N}$, it holds that
	\[
	\|[\hat{B}_t, \hat{A}_t] - [B_i, A_i]\|
	\leq \frac{N\delta}{\gamma}+\frac{w_m}{\gamma}.
	\]
\end{lemma}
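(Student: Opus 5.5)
The plan is to write the data equation for the window, subtract the true current mode, and bound the residual using the normalized excitation condition in Assumption~\ref{assum:pe}. Fix $t$ with $T_i\le t<T_{i+1}$ and write $\Theta_i:=[B_i,A_i]$. For each sample $s\in\{t-L,\dots,t-1\}$ in the window, let $\Theta(s)$ be the mode active at time $s$. Then $x_{s+1}=\Theta(s)d_s+w_s$. Dividing by $n_s$ gives
\[
\bar x_{s+1}=\Theta_i\bar d_s+(\Theta(s)-\Theta_i)\bar d_s+w_s/n_s .
\]
In matrix form this reads $\bar X_{t+1}=\Theta_i\bar D_t+E_t+\bar W_t$. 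Here $E_t$ has columns $(\Theta(s)-\Theta_i)\bar d_s$, and $\bar W_t:=W_tN_t^{-1}$. Since $L\ge m+n$ and $\bar D_t\bar D_t^\top\succ0$ by Assumption~\ref{assum:pe}, we have $\bar D_t\bar D_t^\dagger=I$. Therefore
\[
[\hat B_t,\hat A_t]-\Theta_i=(E_t+\bar W_t)\bar D_t^{\dagger}.
\]

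Next, bound the pseudoinverse. We have $\bar D_t^\dagger=\bar D_t^\top(\bar D_t\bar D_t^\top)^{-1}$, so $\|\bar D_t^\dagger\|=1/\underline\sigma(\bar D_t)\le 1/(\sqrt L\gamma)$ by \eqref{equ:quant_pe}. It then suffices to show
\[
\|E_t\|\le \sqrt L\,N\delta \quad\text{and}\quad \|\bar W_t\|\le \sqrt L\,w_m .
\]
These are column-wise estimates followed by $\|M\|\le\|M\|_F\le\sqrt L\max_s\|M_{:,s}\|$. Normalization gives $\|\bar d_s\|\le1$ and $1/n_s\le1$. Hence each column of $\bar W_t$ has norm at most $w_m$ by Assumption~\ref{ass:bounded}, and each column of $E_t$ has norm at most $\|\Theta(s)-\Theta_i\|$.

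The main step is bounding $\|\Theta(s)-\Theta_i\|$ by $N\delta$ for every $s$ in the window. By the dwell-time condition (Definition~\ref{def:dwell}), switching instants are at least $\tau$ apart. Hence the interval $[t-L,t]$ contains at most $\lceil L/\tau\rceil=N$ switching instants. Consequently $\Theta(s)=\Theta_{i-k}$ for some $0\le k\le N$. Telescoping with Assumption~\ref{assum:switch_jump} gives
\[
\|\Theta_{i-k}-\Theta_i\|\le\sum_{j=i-k+1}^{i}\|\Theta_j-\Theta_{j-1}\|\le k\delta\le N\delta .
\]
Combining the pieces yields
\[
\|[\hat B_t,\hat A_t]-\Theta_i\|\le \frac{\sqrt L(N\delta+w_m)}{\sqrt L\gamma}=\frac{N\delta}{\gamma}+\frac{w_m}{\gamma}.
\]

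The delicate point is this switch count. One must check that the convention for the window endpoints (samples $s=t-L,\dots,t-1$ with $t<T_{i+1}$) never permits $N+1$ distinct modes. If an off-by-one issue arises, I would first try redefining the count via the half-open interval $(t-L,t]$. Alternatively I would use the trivial bound $N\le L$ together with $\tau\ge1$. The remaining steps are routine and mirror the proof of Lemma~\ref{lem:id_error}, with the slow-variation chain $\|\Theta(s)-\Theta_t\|\le L\delta$ replaced by the switch-counting bound.
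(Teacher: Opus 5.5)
Your proposal is correct and follows essentially the same route as the paper's proof. Both decompose the normalized data equation around the current mode $[B_i,A_i]$, bound the switching residual and the noise columnwise by $N\delta$ and $w_m$, pass to the spectral norm via the Frobenius norm, and use $\|\bar D_t^\dagger\|\le 1/(\sqrt L\gamma)$. On the switch count, the closed interval $[t-L,t]$ as you first wrote it could contain $N+1$ instants. The set that actually matters is the half-open interval $(t-L,t]$ of $L$ integers, since a switch exactly at $t-L$ affects no window sample, and dwell time allows at most $\lfloor (L-1)/\tau\rfloor+1=\lceil L/\tau\rceil=N$ switching instants there. So your first fallback is exactly the right fix, and it is slightly more careful than the paper, which simply asserts the count.
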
 

{The proof is given in the appendix.} Notice that the {estimation} error increases with $L$ but decreases with the dwell time $\tau$. In particular, if $\tau \geq L$, meaning that the sliding window data {are} from at most two different systems, then the {normalized} least-squares method achieves the minimal {estimation} error {$(\delta+w_m)/\gamma$}. {For notational simplicity, define}
\begin{equation}\label{equ:epsilon_sw}
	{\epsilon_{\rm sw}:=\frac{N\delta+w_m}{\gamma}.}
\end{equation}

Compared with the continuously varying system \eqref{equ:ltv_sys}, where the variation is assumed to be sufficiently small to preserve sequential stability, we allow a much larger variation $\delta$ in the piecewise-constant LTV system \eqref{equ:switch}. While this may violate sequential stability, we impose the dwell-time condition such that the state can sufficiently decay before the next switch. Let $i$ index the switching intervals $[T_i,T_{i+1})$. Denote $\|x_{\text{max}, i}\|:= \max_{T_i \leq t \leq T_{i+1}} \|x_t\|$ as the maximal state norm from $T_i$ to $T_{i+1}$ and $M_T:=\max\{i:T_i<T\}$ as the largest active switching-interval index up to $T$. For simplicity of presentation, let the initial time be $t_0 = T_0$.  
\begin{theorem}\label{thm:switch}
	Consider Algorithm \ref{algo2_switch} for the system \eqref{equ:switch}, and suppose that Assumptions \ref{assum:control}, \ref{ass:bounded}, \ref{assum:switch_jump}, and \ref{assum:pe} hold. Then, there exist constants {$\nu_i>0,i\in\{1,2,\dots,12\}$} depending on $(a_m,b_m,Q,R,K_{t_0})$ with $\nu_7\in(0,1)$, such that, if 
	\[
	{\begin{aligned}
	\tau &\geq \nu_1,\qquad
	\delta \leq \nu_2,\qquad
	\epsilon_{\rm sw}\leq \nu_3,\\
	\eta &\leq \min\left\{\nu_4,\frac{\nu_5}{\epsilon_{\rm sw}}\right\},
	\end{aligned}}
	\] 
	then {for all $i\in\mathbb{N}$ and all $T_i\leq t<T_{i+1}$, it holds that}
	\begin{equation}\label{equ:state_within_switch}
		\|x_t\| \leq \nu_6(1-\nu_7)^{t-T_i}\|x_{T_i}\|
		+\nu_8(b_me_m+w_m).
	\end{equation}
	{The interval maximum satisfies, for all $i\in\mathbb{N}$,}
	\begin{equation} 
		{\begin{aligned}
		\|x_{\text{max}, i}\| \leq{}& \nu_6\nu_9^i \|x_{T_0}\|\\
		&+ \left(1 + \frac{\nu_6}{1-\nu_9}\right)
		\nu_8(b_me_m+w_m).
		\end{aligned}}
	\end{equation} 
	{Here, $\nu_9=\nu_9(\tau)\in(0,1)$ decreases exponentially in $\tau$. Moreover, for any $T>t_0$, the average frozen-time optimality gap satisfies}
	\begin{equation}\label{equ:regret_sw}
		{\begin{aligned}
		&\frac{1}{T-T_0}
		\sum_{i=0}^{M_T}\sum_{t=T_i}^{\min\{T_{i+1},T\}-1}
		\big(C_i(K_t)-C_i^*\big)\\
		&\qquad\leq
		\frac{\nu_{10}C_{t_0}(K_{t_0})}{\eta(T-T_0)}
		+\frac{\nu_{11}M_T\delta}{\eta(T-T_0)}
		+\nu_{12}\epsilon_{\rm sw}.
		\end{aligned}}
	\end{equation}
\end{theorem}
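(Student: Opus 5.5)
The plan is to run the argument of Theorem~\ref{thm:indirect} inside each switching interval and then handle the jumps separately. Within an interval the true plant $(A_i,B_i)$ is fixed. By Lemma~\ref{lem:id_error_sw}, every estimate used for $T_i\le t<T_{i+1}$ is within $\epsilon_{\rm sw}$ of $(A_i,B_i)$. So each update \eqref{equ:mbgd} is a gradient step on the fixed cost $C_i$ with a perturbed gradient.

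The first step is a perturbed-descent inequality. Using the local smoothness and gradient dominance of $C_i$ on sublevel sets \cite{fazel2018global}, and a perturbation bound $\|\nabla\hat C_{t+1}(K)-\nabla C_i(K)\|\le c\,\epsilon_{\rm sw}$ that holds uniformly on a sublevel set, I aim to show
\[
C_i(K_{t+1})-C_i^*\le (1-c_1\eta)\big(C_i(K_t)-C_i^*\big)+c_2\eta\,\epsilon_{\rm sw}.
\]
This needs $\eta\le\nu_4$ and $\eta\le\nu_5/\epsilon_{\rm sw}$, with $\epsilon_{\rm sw}\le\nu_3$ chosen so the iterates remain in a fixed sublevel set $\{C_i\le \bar C\}$.

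The second step controls what happens at a jump. At $T_{i+1}$ the gain $K_{T_{i+1}}$ satisfies $C_i(K)\le\bar C$, and the cost is continuous in the model on compact sublevel sets. Hence, for $\delta\le\nu_2$, we get $C_{i+1}(K_{T_{i+1}})\le \bar C'$ with $\bar C'$ uniform in $i$. Choosing $\bar C$ as a uniform bound over the modes, for example with a factor-two margin relative to $C_{t_0}(K_{t_0})$, closes the induction. This invariance of a uniform sublevel set across infinitely many switches is the main obstacle. The sublevel bound must not drift up with each switch, so I would either require $\tau\ge\nu_1$ long enough that the cost contracts back below a threshold before the next jump, or use the fact that the $\epsilon_{\rm sw}$-ball iterates converge to a neighborhood of $K_i^*$ whose cost is uniformly bounded.

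The third step converts the uniform bound $C_i(K_t)\le\bar C$ into a uniform decay certificate. It gives $\|K_t\|\le\kappa$ and $\|A_i+B_iK_t\|_{P}$-type contraction with rate $1-\nu_7$, with $P$ from \eqref{equ:Lyap} bounded by $\bar C$. Within the interval, $K_{t+1}-K_t=O(\eta)$, so the matrices $H_t=P_t^{1/2}$ vary slowly. The sequential stability argument of Theorem~\ref{thm:indirect} then applies on the interval without any jump, and unrolling $x_{t+1}=(A_i+B_iK_t)x_t+B_ie_t+w_t$ yields \eqref{equ:state_within_switch}.

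For the interval maximum, \eqref{equ:state_within_switch} gives $\|x_{T_{i+1}}\|\le\nu_6(1-\nu_7)^{\tau}\|x_{T_i}\|+\nu_8(b_me_m+w_m)$. I will set $\nu_9:=\nu_6(1-\nu_7)^{\tau}$ and take $\tau\ge\nu_1$ so that $\nu_9<1$. Iterating with the geometric sum gives $\|x_{T_i}\|\le\nu_9^i\|x_{T_0}\|+\nu_8(b_me_m+w_m)/(1-\nu_9)$. Plugging this back into \eqref{equ:state_within_switch} gives the stated bound on $\|x_{\max,i}\|$.

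For the optimality gap, I will sum the descent inequality over each interval and telescope:
\[
c_1\eta\sum_t\big(C_i(K_t)-C_i^*\big)\le \big(C_i(K_{T_i})-C_i^*\big)-\big(C_i(K_{T_{i+1}})-C_i^*\big)+c_2\eta\,\epsilon_{\rm sw}(T_{i+1}-T_i).
\]
At each switch the boundary terms do not cancel exactly. I will bound the mismatch using Lipschitz continuity of $C$ and $C^*$ in the model on the sublevel set:
\[
\big|\big(C_{i+1}(K)-C_{i+1}^*\big)-\big(C_i(K)-C_i^*\big)\big|\le c_3\delta.
\]
Summing over intervals up to $M_T$ leaves the initial term $C_{t_0}(K_{t_0})$, plus $M_T c_3\delta$, plus the $\epsilon_{\rm sw}$ term. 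Dividing by $c_1\eta(T-T_0)$ gives \eqref{equ:regret_sw}.
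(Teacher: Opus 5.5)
Your architecture is the paper's. You run perturbed descent on the frozen cost $C_i$ inside each interval via Lemma~\ref{lem:id_error_sw}, and use a dwell-time argument to keep a uniform sublevel set across switches. You then apply sequential stability within each interval with $\nu_9=\nu_6(1-\nu_7)^\tau$, and telescope interval-wise, charging the jump mismatch to $\delta$. The problem is that the step you leave open is the crux of the proof, and the two concrete closures you suggest do not work. A level fixed relative to the initial cost, such as $2C_{t_0}(K_{t_0})$, is not invariant, for two reasons. After many switches the modes can drift to where $C_i^*$ exceeds it. And if the gain enters a switch at level $\bar C$, the jump can push $C_{i+1}(K_{T_{i+1}})$ above $\bar C$, so a bound of the form $C_i(K_{T_{i+1}})\le\bar C\Rightarrow C_{i+1}(K_{T_{i+1}})\le\bar C'$ never returns you to $\bar C$. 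Appealing to convergence of the iterates toward a neighborhood of $K_i^*$ does not help without a dwell time either, since each jump adds up to $\bar p_{10}^*\delta$ before any descent happens.

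What closes the induction in the paper is an end-of-interval invariant anchored at $\overline{C^*}=\sup_i C_i^*$: one shows $C_i(K_{T_{i+1}})\le\overline{C^*}+1$ for every $i$. This needs two conditions. First, $\epsilon_{\rm sw}\le 1/(4\bar p_6\bar p_3\bar\mu)$, so the bias floor $2\mu_i\bar p_6\bar p_3\epsilon_{\rm sw}$ of the unrolled descent inequality is at most $1/2$. Second, $(1-\eta/(2\bar\mu))^\tau\,\overline C\le 1/2$, so the transient is at most $1/2$ by the end of the interval. After the jump, Lemma~\ref{lem:cost_diff_model} at level $\overline{C^*}+1$ gives $C_{i+1}(K_{T_{i+1}})\le\overline{C^*}+1+p_{10}^*\delta$. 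The same two-case argument as in Lemma~\ref{lem:bounded} then keeps the cost below $\overline C=\overline{C^*}+1+\frac{1}{4\bar l\bar\mu}+p_{10}^*p_9^*+C_0(K_{t_0})$ throughout the next interval, so all smoothness and perturbation constants can be evaluated once at $\overline C$. Note that the second condition makes the required dwell time of order $\bar\mu\log(2\overline C)/\eta$. The paper's proof has the same $\eta$-dependence, even though the statement lists $\nu_1$ among constants independent of $\eta$, so you should not expect to get $\nu_1$ free of $\eta$. Finally, in the optimality-gap step you do not need Lipschitz continuity of $C^*$ in the model. The term $C_i^*$ cancels within each interval's telescoping sum, so only $|C_{i+1}(K)-C_i(K)|\le\bar p_{10}^*\delta$ from Lemma~\ref{lem:cost_diff_model} is required.
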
 

{Theorem \ref{thm:switch} differs from Theorem \ref{thm:indirect} in two essential aspects. First, the piecewise-constant setting permits abrupt jumps of the dynamics, so the sequential-stability argument used for slowly varying systems cannot hold globally in time. This is why a dwell-time condition is needed here: it gives the adaptive policy enough time to recover within each fixed mode before the next jump. The identification error $\epsilon_{\rm sw}$ also reflects this structure, since it depends on the number of switches contaminating the sliding window rather than on a per-step drift.}

{Second, the stability {result} has two layers. The bound \eqref{equ:state_within_switch} gives  a time-step decay of the state inside a fixed switching interval, while the interval-wise bound controls the maximal state norm across intervals and decays with the switching index. Thus, different from Theorem \ref{thm:indirect}, the result does not claim a single global exponential decay rate over all time steps; it combines within-mode decay with a dwell-time-based contraction across switches. The optimality-gap bound has the same interpretation: its drift term scales with the switching frequency $M_T/(T-T_0)$ instead of the continuous variation level in \eqref{equ:regret_ltv}. Compared to our previous work \cite{laurent2026adaptive}, the theorem allows infinitely many switches and keeps the constants uniform.}

\subsection{Discussions}

{Our results provide deterministic stability certificates for two representative LTV settings. The slowly varying case gives a time-step-wise PES bound without dwell time, but it requires the model drift to be small enough to preserve sequential stability. The piecewise-constant case allows larger jumps, but replaces global sequential stability by within-mode stability plus a dwell-time contraction across switches. These certificates are therefore complementary: one is suited to continuous drift, while the other is suited to abrupt but sufficiently separated changes. The constants are not meant to be tight design rules; rather, they make explicit how variation, excitation, process noise, and stepsize interact in the feedback loop.}


{The use of normalized sliding-window least squares is central to the analysis. Normalization removes the need for an a priori state bound in the identification error, and the sliding window makes the model estimation adaptive to local dynamics. The window length creates the usual bias--variance trade-off: a longer window can average noise, but it also mixes data from different frozen-time models and increases the variation-induced bias in Lemmas~\ref{lem:id_error} and~\ref{lem:id_error_sw}.  Exponentially weighted data with a forgetting factor could provide a smoother version of this trade-off, especially in stochastic settings, but it would introduce an additional tuning parameter and a different deterministic bias term. We leave the investigation of the forgetting factor mechanism to future work.}


{Another natural extension is to analyze direct data-driven policy optimization methods such as data-enabled policy optimization (DeePO) \cite{zhao2024data} for LTV systems. Different from the indirect approach in Algorithm \ref{algo2_switch}, DeePO updates the policy directly through a covariance parameterization and avoids explicit model estimation. In the LTV setting, the covariance and the frozen-time objective evolve online, and hence a stability proof would need to control the sample covariance and the policy update simultaneously, which is more delicate than the model-based perturbation argument used here.}

{Finally, one may replace the frozen-time infinite-horizon LQR objective by a discounted or finite-horizon surrogate, since future dynamics are unknown. Such objectives emphasize near-future behavior and may better match rapidly varying environments. However, discounted LQR policies are not automatically stabilizing for the undiscounted closed-loop system, and the choice of discount factor would interact with the {policy gradient stepsize} and the excitation signal~\cite{zhao2022sample}. Establishing stability certificates for these alternative objectives remains an interesting direction.}

\section{Numerical case studies} 
In this section, we illustrate the effectiveness of the proposed PGAC algorithm on LTV systems and nonlinear systems. {In particular, the nonlinear example validates the scenario when the LTV model used for adaptation is only a local approximation of the closed-loop dynamics.}    
\subsection{Continuously time-varying systems} 
We first consider a slowly time-varying system in the form of \eqref{equ:ltv_sys} adapted from \cite{dean2020sample} with
$$
	A_t = \underbrace{\begin{bmatrix}
		1.01 & 0.01 & 0 \\
		0.01 & 1.01 & 0.01 \\
		0 & 0.01 & 1.01
	\end{bmatrix}}_{A} + \alpha \sin\!\left(\frac{2\pi t}{T_v}\right)\Delta, ~B_t = I_3,
$$
where $\Delta = \mathrm{diag}(1,0.6,0.3)$ specifies the variation direction, 
$\alpha = 0.3$ controls the variation amplitude, and $T_v=200$ denotes the variation period. The penalty matrices are set to $Q = I_3, R = 10^{-3} I_3$.

The controller is updated using Algorithm \ref{algo2_switch} {with the normalized least-squares estimate in \eqref{equ:sysid}}. 
The sliding window length is set to $L=20$, and the policy gradient stepsize is $\eta = 0.05$. 
{The initial stabilizing policy is computed as the certainty-equivalence LQR gain for the initial frozen-time system.}  
During online operation, the control input is given by
$
u_t = K_t x_t + e_t,
$
where {$e_t$ is sampled uniformly from $[-10^{-2},10^{-2}]^3$ and the process noise is sampled uniformly from $[-2\times 10^{-3},2\times 10^{-3}]^3$}.

Fig.~\ref{fig:slow_var} illustrates the state norm $\|x_t\|$ and the spectral radius of the open-loop matrix $\rho(A_t)$.  
As indicated by Theorem \ref{thm:indirect}, the PGAC algorithm successfully stabilizes the system, and the state rapidly converges to a small neighborhood of the origin induced by the probing signal. {This happens despite the fact that the open-loop matrix $A_t$ is time-varying and unstable for parts of the trajectory, as reflected by $\rho(A_t)>1$ in Fig.~\ref{fig:slow_var}.}
{Fig.~\ref{fig:slow_gap} further reports the frozen-time optimality gap $({J_t-J_t^\star})/{J_t^\star}$, where $J_t$ is the LQR cost of the learned policy on the current frozen-time model and $J_t^\star$ is the corresponding optimal LQR cost. We also plot the gap of the fixed nominal LQR controller designed at the initial frozen-time model. In this slowly varying example, both gaps remain small, which is consistent with the fact that the nominal model is only mildly mismatched.}


\begin{figure}[t]
	\centering
	\includegraphics[width=0.9\linewidth]{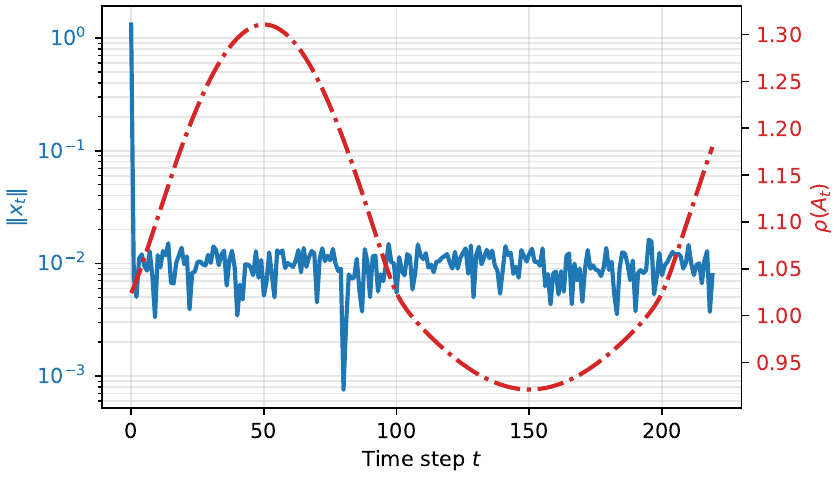}
	\caption{{Spectral radius $\rho(A_t)$ of the slowly time-varying system and the state norm $\|x_t\|$ under the PGAC controller with bounded uniform noise.}}
	\label{fig:slow_var}
\end{figure}

\begin{figure}[t]
	\centering
	\includegraphics[width=0.9\linewidth]{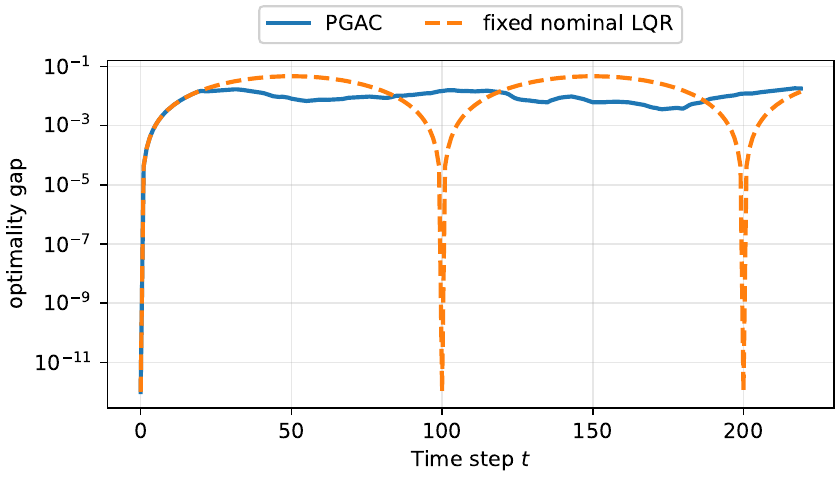}
	\caption{{Frozen-time optimality gap of PGAC and the fixed nominal LQR controller for the slowly time-varying system with bounded uniform noise.}}
	\label{fig:slow_gap}
\end{figure}

\subsection{Piecewise-constant time-varying systems}

Next, we consider a piecewise-constant system, where the system matrix $A_t$ switches among $\{A_1,A_2,A_3\}$, where
\[
{\begin{aligned}
A_1 &= A + 0.5\,\mathrm{diag}(1,0.6,0.3),\\
A_2 &= A - 0.5\,\mathrm{diag}(1,0.5,0.2),
\end{aligned}}
\]
\[
A_3 = A + 
\begin{bmatrix}
	0 & 0.010 & 0 \\
	0 & 0 & 0.008 \\
	0 & 0 & 0
\end{bmatrix}.
\]
The input matrix remains constant, i.e., $B_t = I_3$. The switching signal follows a periodic pattern with dwell time $\tau=20$. {The controller again uses normalized least-squares with $L=20$, while the probing and process noises are uniformly bounded as in the continuously time-varying example.}

Fig.~\ref{fig:piecewise_ltv} shows the spectral radius $\rho(A_t)$ together with the state norm $\|x_t\|$. 
The plot of $\rho(A_t)$ clearly illustrates the piecewise-constant switching behavior of the system dynamics. 
Despite these abrupt changes, the PGAC algorithm maintains closed-loop stability and keeps the state bounded. Fig.~\ref{fig:gap_piecewise} further shows the frozen-time optimality gap
$
({J_t - J_t^\star})/{J_t^\star},
$
where $J_t^\star$ denotes the optimal LQR cost corresponding to the current system $(A_t,B_t)$. 
{The fixed nominal LQR controller is included for comparison. The vertical jumps in the optimality gap occur at switching instants, after which PGAC readapts to the new mode.} The results demonstrate that the proposed algorithm is able to adapt to switching system dynamics and achieve near-optimal performance for each operating mode {despite bounded process noise}.

\begin{figure}[t]
	\centering
	\includegraphics[width=0.9\linewidth]{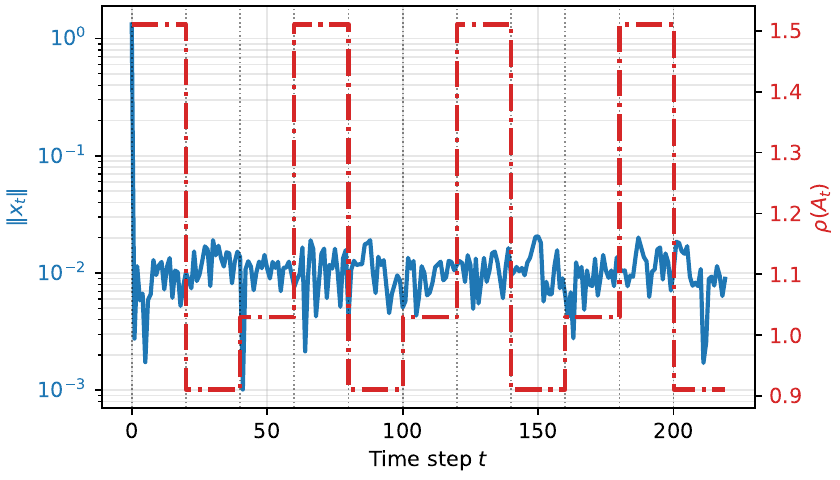}
	\caption{{Spectral radius $\rho(A_t)$ and state norm $\|x_t\|$ for the piecewise-constant time-varying system under PGAC with bounded uniform noise. Vertical dashed lines indicate switching instants.}}
	\label{fig:piecewise_ltv}
\end{figure}

\begin{figure}[t]
	\centering
	\includegraphics[width=0.9\linewidth]{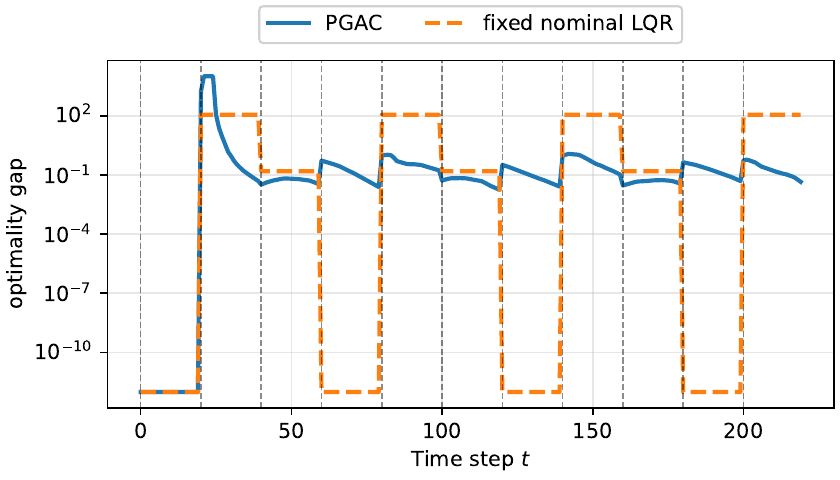}
	\caption{{Frozen-time optimality gap of PGAC and the fixed nominal LQR controller under switching dynamics and bounded uniform noise. Vertical dashed lines indicate switching instants.}}
	\label{fig:gap_piecewise}
\end{figure}

\subsection{{Nonlinear planar quadrotor}}

{Finally, we evaluate the proposed method on a nonlinear planar quadrotor model \cite{bartos2025stability}.} {The state is given by
$x_t=[p_{x,t},p_{z,t},v_{x,t},v_{z,t},\phi_t,\omega_t]^\top$, and the input is the hover-normalized thrust and torque $u_t=[u_{1,t},u_{2,t}]^\top$. The nonlinear dynamics are discretized by the Euler method:
\begin{align*}
	p_{x,t+1} &= p_{x,t}+h v_{x,t}+w_{1,t},\\
	p_{z,t+1} &= p_{z,t}+h v_{z,t}+w_{2,t},\\
	v_{x,t+1} &= v_{x,t}+h\left[-(g+u_{1,t})\sin(\phi_t)+d_t\right]+w_{3,t},\\
	v_{z,t+1} &= v_{z,t}+h\left[(g+u_{1,t})\cos(\phi_t)-g\right]+w_{4,t},\\
	\phi_{t+1} &= \phi_t+h\omega_t+w_{5,t},\\
	\omega_{t+1} &= \omega_t+h u_{2,t}/J_t+w_{6,t}.
\end{align*}
Here, $d_t$ models a horizontal wind disturbance and $J_t$ is the time-varying moment of inertia. We consider a persistent square-wave disturbance,
\begin{align*}
	d_t&=1.8\,\operatorname{sign}\!\left(\sin(2\pi t/260)\right),\\
	J_t&=1+0.60\,\operatorname{sign}\!\left(\sin(2\pi t/360)\right).
\end{align*}
The process noise and the probing signal are independently sampled from uniform distributions with bounded support, which is consistent with the bounded-noise setting considered in the theory.}

{We compare PGAC with a fixed nominal LQR controller designed from the hover linearization with nominal inertia and then kept constant throughout the run. The adaptive controller uses a sliding-window normalized least-squares estimate of the local linear dynamics, followed by the {policy gradient update} computed from the estimated model. Fig.~\ref{fig:quadrotor_square} reports the state norm, and Fig.~\ref{fig:quadrotor_disturbance} shows the disturbance profiles. The drift that starts around the middle of the trajectory coincides with a change in the square-wave wind profile; the fixed nominal controller cannot compensate for this persistent mismatch and drifts away from the operating point. In contrast, PGAC adapts from the recent closed-loop data and returns the state to a small neighborhood of the origin.}

\begin{figure}[t]
	\centering
	\includegraphics[width=0.9\linewidth]{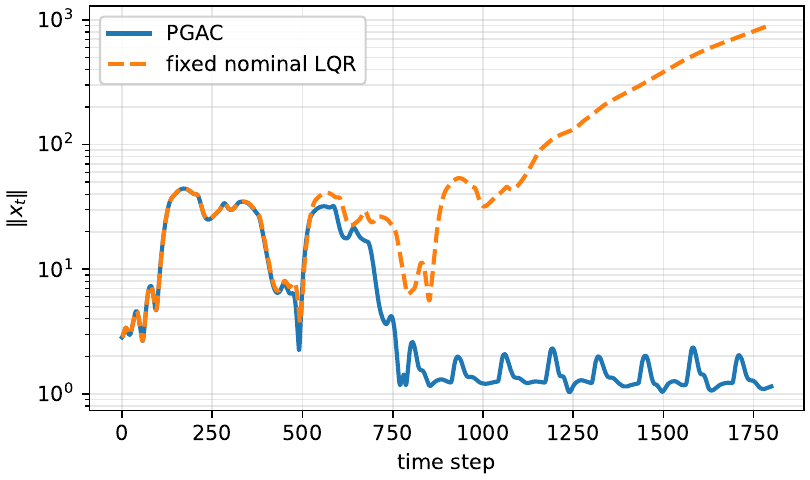}
	\caption{{State norm of the nonlinear planar quadrotor under persistent square-wave disturbance and bounded uniform process noise. PGAC adapts from closed-loop data using normalized least-squares, while the baseline is a fixed nominal LQR controller.}}
	\label{fig:quadrotor_square}
\end{figure}

\begin{figure}[t]
	\centering
	\includegraphics[width=0.9\linewidth]{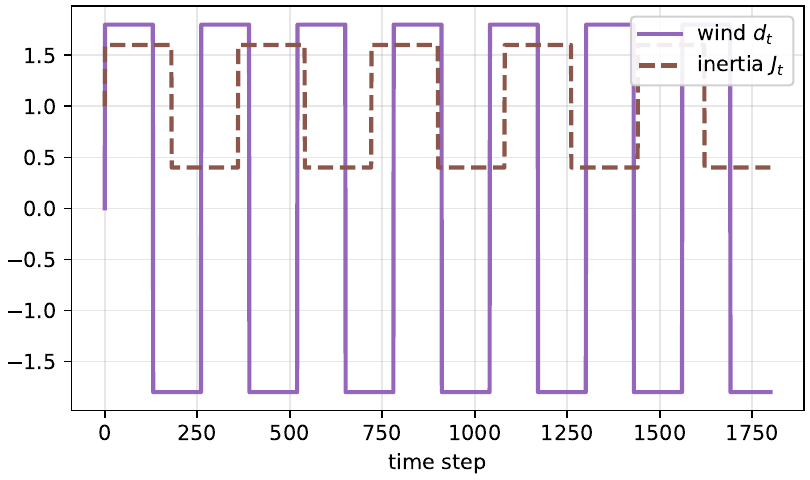}
	\caption{{Wind and inertia profiles used in the nonlinear planar quadrotor simulation.}}
	\label{fig:quadrotor_disturbance}
\end{figure}

\section{Conclusions}
This paper has proposed a policy gradient adaptive control method for stabilizing linear time-varying systems. We have shown practical stability for both slowly time-varying systems and piecewise-constant systems. Simulations have validated our theoretical results.

Future work includes extending the analysis to {high-probability} stochastic settings with process noise, where the identification accuracy and stability conditions depend on the signal-to-noise ratio. It is also of interest to study the optimal choice of the window length or alternative forgetting mechanisms in noisy environments. Finally, investigating discounted LQR formulations and their stability in adaptive control remains an interesting direction.

\appendix
\section{Proof of Lemma \ref{lem:id_error}}
The data matrices $(X_t,U_t,X_{t+1})$ satisfy the following dynamics
\begin{align*}
	&x_{t-i+1}
	= A_{t-i}x_{t-i}+B_{t-i}u_{t-i}+w_{t-i} \\
	&= A_tx_{t-i}+B_tu_{t-i}
	+(A_{t-i}-A_t)x_{t-i} \\
	&+(B_{t-i}-B_t)u_{t-i}+w_{t-i} \\
	&= A_tx_{t-i}+B_tu_{t-i}
	+\left(\sum_{j=1}^{i}(A_{t-j}-A_{t-j+1})\right)x_{t-i} \\
	&\quad
	+\left(\sum_{j=1}^{i}(B_{t-j}-B_{t-j+1})\right)u_{t-i}
	+w_{t-i} \\
	&\overset{\eqref{equ:variation}}{=}
	A_tx_{t-i}+B_tu_{t-i}
	+\sum_{j=1}^{i}\Delta_{t-j+1}
	[u_{t-i}^{\top},x_{t-i}^{\top}]^{\top}
	+w_{t-i} \\
	&:= A_tx_{t-i}+B_tu_{t-i}+v_{t-i}+w_{t-i},
	\qquad i\in\{1,\dots,L\}.
\end{align*}
Here, $v_{t-i}$ denotes the residual induced by the time variation of the
system matrices. Let
\[
V_t:=\begin{bmatrix}v_{t-L}&v_{t-L+1}&\cdots&v_{t-1}\end{bmatrix}.
\]
Then, $X_{t+1}=[B_t,A_t]D_t+V_t+W_t$. Multiplying both sides by $N_t^{-1}$
gives
\[
\bar X_{t+1}=[B_t,A_t]\bar D_t+\bar V_t+\bar W_t,
\]
where $\bar V_t:=V_tN_t^{-1}$ and $\bar W_t:=W_tN_t^{-1}$.

Since $\bar D_t$ has full row rank, the normalized least-squares estimate
satisfies $[\hat B_t,\hat A_t]=\bar X_{t+1}\bar D_t^\dagger$. Thus,
\begin{equation}\label{equ:bb}
	\begin{aligned}
		\|[\hat B_t,\hat A_t]-[B_t,A_t] \|
		&=
		\|(\bar V_t+\bar W_t)\bar D_t^\dagger\|\\
		&\leq \|\bar V_t\bar D_t^\dagger\|
		+
		\|\bar W_t\bar D_t^\dagger\|. 
	\end{aligned}
\end{equation} 
For the first term, since $\|\bar d_s\|\leq 1$, and
\[
\bar v_{t-i}:=v_{t-i}/n_{t-i}
=
\left(\sum_{j=1}^{i}\Delta_{t-j+1}\right)\bar d_{t-i},
\]
it follows from Assumption \ref{assum:variation} that $\|\bar v_{t-i}\|\leq i\delta$.
Furthermore, it holds that
\[
\|\bar V_t\|_F^2
=
\sum_{i=1}^{L}\|\bar v_{t-i}\|^2
\leq
\delta^2\sum_{i=1}^{L}i^2
\leq
\delta^2L^3,
\]
and hence $\|\bar V_t\|\leq \delta L^{3/2}$. 	Moreover, since Assumption \ref{assum:pe} implies
$\|\bar D_t^\dagger\|\leq 1/(\sqrt L\gamma)$, we have
\[
\|\bar V_t\bar D_t^\dagger\|
\leq
\|\bar V_t\|\|\bar D_t^\dagger\|
\leq
\frac{L\delta}{\gamma}.
\]
For the second term in \eqref{equ:bb}, since $n_t\geq 1$ and $\|w_t\|\leq w_m$,
\[
\|\bar W_t\|_F^2
=
\sum_{i=1}^{L}\|w_{t-i}/n_{t-i}\|^2
\leq
Lw_m^2.
\]
Hence $\|\bar W_t\|\leq \sqrt L w_m$, and
\[
\|\bar W_t\bar D_t^\dagger\|
\leq
\|\bar W_t\|\|\bar D_t^\dagger\|
\leq
\frac{w_m}{\gamma}.
\]
Combining the two bounds  completes the proof.

{\section{Proof of Lemma \ref{lem:id_error_sw}}
Fix a time $t$ with $T_i\leq t<T_{i+1}$. For the $\ell$-th column in the sliding window, write $(A_{t-\ell},B_{t-\ell})$ for the system matrices active at time $t-\ell$. Then
\[
\begin{aligned}
x_{t-\ell+1}
&=A_{t-\ell}x_{t-\ell}+B_{t-\ell}u_{t-\ell}
+w_{t-\ell},\\
&\hspace{3.6cm}\ell\in\{1,\ldots,L\}.
\end{aligned}
\]
Adding and subtracting the current mode $(A_i,B_i)$ gives
\[
x_{t-\ell+1}
=A_ix_{t-\ell}+B_iu_{t-\ell}+q_{t-\ell}+w_{t-\ell},
\]
where
\[
q_{t-\ell}:=([B_{t-\ell},A_{t-\ell}]-[B_i,A_i])d_{t-\ell}.
\]
Since at most $N=\lceil L/\tau\rceil$ switches can occur in a window of length $L$, the accumulated jump between any sample in the window and the current mode is bounded by
\[
\|[B_{t-\ell},A_{t-\ell}]-[B_i,A_i]\|\leq N\delta,
\qquad \ell\in\{1,\ldots,L\}.
\]
With $E_t:=\begin{bmatrix}q_{t-L}&q_{t-L+1}&\cdots&q_{t-1}\end{bmatrix}$, we therefore have
\[
X_{t+1}=[B_i,A_i]D_t+E_t+W_t .
\]
Multiplying by $N_t^{-1}$ gives
\[
\bar X_{t+1}=[B_i,A_i]\bar D_t+\bar E_t+\bar W_t,
\]
where $\bar E_t:=E_tN_t^{-1}$ and $\bar W_t:=W_tN_t^{-1}$. The normalized least-squares estimate satisfies
\[
[\hat B_t,\hat A_t]=\bar X_{t+1}\bar D_t^\dagger,
\]
and hence
\[
\|[\hat B_t,\hat A_t]-[B_i,A_i]\|
\leq
\|\bar E_t\bar D_t^\dagger\|
+
\|\bar W_t\bar D_t^\dagger\|.
\]
For the switching term, since $\bar d_{t-\ell}=d_{t-\ell}/n_{t-\ell}$ and $\|\bar d_{t-\ell}\|\leq 1$,
\[
\|q_{t-\ell}/n_{t-\ell}\|
\leq
\|[B_{t-\ell},A_{t-\ell}]-[B_i,A_i]\|\,\|\bar d_{t-\ell}\|
\leq N\delta .
\]
Thus $\|\bar E_t\|\leq \|\bar E_t\|_F\leq \sqrt L\,N\delta$. Assumption \ref{assum:pe} gives $\|\bar D_t^\dagger\|\leq 1/(\sqrt L\gamma)$, so
\[
\|\bar E_t\bar D_t^\dagger\|
\leq
\frac{N\delta}{\gamma}.
\]
Similarly, using $n_{t-\ell}\geq 1$ and $\|w_{t-\ell}\|\leq w_m$, we obtain $\|\bar W_t\|\leq \sqrt L w_m$ and therefore
\[
\|\bar W_t\bar D_t^\dagger\|
\leq
\frac{w_m}{\gamma}.
\]
Combining the two estimates proves
\[
\|[\hat B_t,\hat A_t]-[B_i,A_i]\|
\leq
\frac{N\delta+w_m}{\gamma},
\]
which completes the proof.}

\section{Proof of Theorem \ref{thm:indirect}}	 
We first provide some useful lemmas from \cite{zhao2025policy}.
\begin{lemma}\label{lem:bounds}
	For $K \in \mathcal{S}$, it holds
	(\romannumeral1) 
	$
	\|\Sigma\|  \leq {C(K)}/{\underline{\sigma}(Q)},
	$
	(\romannumeral2) 
	$
	\|P\| \leq C(K),
	$
	and (\romannumeral3) 
	$
	\|K\|_F \leq ({C(K)}/{\underline{\sigma}(R)})^{{1}/{2}}.
	$
\end{lemma}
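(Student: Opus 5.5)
The three bounds all follow from the identity $C(K)=\mathrm{Tr}((Q+K^\top RK)\Sigma)=\mathrm{Tr}(P)$ for $K\in\mathcal{S}$, using $\Sigma\succeq I_n$ and $P\succeq Q+K^\top RK$. Recall that $\Sigma=\sum_{k\ge 0}(A+BK)^k((A+BK)^\top)^k$ and $P=\sum_{k\ge0}((A+BK)^\top)^k(Q+K^\top RK)(A+BK)^k$. Both series converge since $\rho(A+BK)<1$.

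First, I will verify that the trace of $(Q+K^\top RK)\Sigma$ equals $\mathrm{Tr}(P)$. Substituting the series for $\Sigma$ and using cyclicity of the trace termwise gives this directly. Every term of each series is positive semidefinite. Hence $\Sigma\succeq I_n$, with the $k=0$ term giving the lower bound. Likewise $P\succeq Q+K^\top RK\succeq 0$.

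\emph{Bound (i).} Since $K^\top RK\succeq0$ and $\Sigma\succ0$, we have $C(K)\ge \mathrm{Tr}(Q\Sigma)$. Also $\mathrm{Tr}(Q\Sigma)\ge \underline{\sigma}(Q)\,\mathrm{Tr}(\Sigma)\ge\underline{\sigma}(Q)\|\Sigma\|$. The step $\mathrm{Tr}(Q\Sigma)\ge\underline\sigma(Q)\mathrm{Tr}(\Sigma)$ holds because $\mathrm{Tr}(Q\Sigma)=\mathrm{Tr}(\Sigma^{1/2}Q\Sigma^{1/2})$ and $Q\succeq\underline\sigma(Q)I$. The step $\mathrm{Tr}(\Sigma)\ge\|\Sigma\|$ holds for PSD matrices.

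\emph{Bound (ii).} Since $P\succeq0$, we get $\|P\|\le\mathrm{Tr}(P)=C(K)$.

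\emph{Bound (iii).} Using $\Sigma\succeq I$ and $K^\top RK\succeq0$, we have $C(K)\ge\mathrm{Tr}(K^\top RK\Sigma)\ge\mathrm{Tr}(K^\top RK)$. Then $\mathrm{Tr}(K^\top RK)\ge\underline\sigma(R)\mathrm{Tr}(K^\top K)=\underline\sigma(R)\|K\|_F^2$. Rearranging gives (iii).

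The only step needing care is the trace identity $\mathrm{Tr}((Q+K^\top RK)\Sigma)=\mathrm{Tr}(P)$. I will justify it by absolute convergence of the two Neumann-type series, which lets the trace be exchanged with the sums. The remaining inequalities are routine facts about PSD matrices.
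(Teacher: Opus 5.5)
Your proof is correct: the identity $C(K)=\mathrm{Tr}(P)$, together with $\Sigma\succeq I_n$, $P\succeq 0$, $Q\succeq\underline{\sigma}(Q)I_n$ and $R\succeq\underline{\sigma}(R)I_m$, gives all three bounds as you describe. The paper does not prove this lemma itself but imports it from \cite{zhao2025policy}; the standard argument for it is this same trace and positive-semidefiniteness reasoning, so your route matches the intended one.
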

   
\begin{lemma}[{Lyapunov perturbation}]\label{lem:perturb}
	Let $A\in \mathbb{R}^{n \times n}$ be stable and $\Sigma(A)$ be the unique positive definite solution to $\Sigma(A) = I_n + A\Sigma(A) A^{\top}$. If 
	$
	\|A'-A\| \leq 1/({4\|\Sigma(A)\|(1+a_m)}),
	$
	then $A'$ is stable and
	$
	\|\Sigma(A')-\Sigma(A)\| \leq 4 \|\Sigma(A)\|^2(1+a_m)\|A'-A\|.
	$
\end{lemma}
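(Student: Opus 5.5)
The plan is to use the standard operator-theoretic argument of \cite{fazel2018global}, reading $a_m$ as an upper bound on $\|A\|$ as in Assumption~\ref{ass:bounded}. Write $\Delta:=A'-A$, and define the linear maps $\mathcal{F}_A(X):=AXA^\top$ and $\mathcal{T}_A(X):=\sum_{k\geq 0}A^kX(A^\top)^k$. The latter is well defined because $A$ is stable, and $\Sigma(A)=\mathcal{T}_A(I_n)$.

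\textbf{Step 1: operator bounds.}
\begin{itemize}
\item For any symmetric $X$ we have $-\|X\|I_n\preceq X\preceq \|X\|I_n$. Monotonicity of $\mathcal{T}_A$ then gives $\|\mathcal{T}_A(X)\|\leq \|X\|\,\|\Sigma(A)\|$, so $\|\mathcal{T}_A\|\leq\|\Sigma(A)\|$ in the induced operator norm.
\item The identity $A'XA'^\top-AXA^\top=\Delta XA'^\top+AX\Delta^\top$ gives $\|(\mathcal{F}_{A'}-\mathcal{F}_A)(X)\|\leq \|\Delta\|(2a_m+\|\Delta\|)\|X\|$.
\item Since $\Sigma(A)\succeq I_n$, the hypothesis implies $\|\Delta\|\leq 1/4$. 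Hence $\|\mathcal{F}_{A'}-\mathcal{F}_A\|\leq 2(1+a_m)\|\Delta\|$, and $\|\mathcal{T}_A\circ(\mathcal{F}_{A'}-\mathcal{F}_A)\|\leq 2(1+a_m)\|\Sigma(A)\|\|\Delta\|\leq 1/2$.
\end{itemize}

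\textbf{Step 2: a priori bound.} Suppose $A'$ is stable and let $\Sigma':=\Sigma(A')$. Rearranging the Lyapunov equation as $\Sigma'-A\Sigma'A^\top=I_n+(\mathcal{F}_{A'}-\mathcal{F}_A)(\Sigma')$ and applying $\mathcal{T}_A$ gives
\[
\Sigma'=\Sigma(A)+\mathcal{T}_A\big((\mathcal{F}_{A'}-\mathcal{F}_A)(\Sigma')\big).
\]
Hence $\|\Sigma'\|\leq\|\Sigma(A)\|+\tfrac12\|\Sigma'\|$, that is, $\|\Sigma'\|\leq 2\|\Sigma(A)\|$. Substituting this back into the same identity yields
\[
\|\Sigma'-\Sigma(A)\|\leq \|\Sigma(A)\|\cdot 2(1+a_m)\|\Delta\|\cdot 2\|\Sigma(A)\| = 4\|\Sigma(A)\|^2(1+a_m)\|\Delta\|,
\]
which is the claimed estimate.

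\textbf{Step 3: stability of $A'$.} This is the main obstacle, because Step 2 assumed stability. I would use a homotopy argument along $A_s:=A+s\Delta$, $s\in[0,1]$.
\begin{itemize}
\item Let $\mathcal{I}:=\{s\in[0,1]: A_{s'}\text{ is stable for all }s'\in[0,s]\}$. Then $0\in\mathcal{I}$, and $\mathcal{I}$ is relatively open because the spectral radius is continuous.
\item Every $A_s$ satisfies the hypothesis with $s\Delta$ in place of $\Delta$. So Step 2 gives $\|\Sigma(A_s)\|\leq 2\|\Sigma(A)\|$ uniformly for $s\in\mathcal{I}$.
\item Suppose $\bar s:=\sup\mathcal{I}$ were not attained, or $A_{\bar s}$ had $\rho(A_{\bar s})=1$. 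Pick an eigenvector $v$ of $A_s^\top$ with eigenvalue $\lambda$. Then $v^*\Sigma(A_s)v=\sum_k|\lambda|^{2k}\|v\|^2=\|v\|^2/(1-|\lambda|^2)$. As $s\uparrow\bar s$ we have $\rho(A_s)\to 1$, so $\|\Sigma(A_s)\|\to\infty$, contradicting the uniform bound.
\item Therefore $\mathcal{I}$ is also closed, so $\mathcal{I}=[0,1]$ and $A'=A_1$ is stable.
\end{itemize}
Combining Step 3 with Step 2 completes the proof.
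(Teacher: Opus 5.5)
Your proof is correct, and it follows essentially the argument the paper relies on, with one genuine addition. The paper gives no proof of this lemma; it imports it from \cite{zhao2025policy}, which rests on the standard operator-perturbation argument of \cite{fazel2018global}: show $\|\mathcal{T}_A\|\,\|\mathcal{F}_{A'}-\mathcal{F}_A\|\le 1/2$, then perturb the inverse of $\mathcal{I}-\mathcal{F}_A$. Your Steps~1--2 are exactly that argument, and the constants reproduce the stated bound $4\|\Sigma(A)\|^2(1+a_m)\|A'-A\|$.

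The addition is Step~3. The inverse-perturbation identity alone only shows that $\mathcal{I}-\mathcal{F}_{A'}$ is invertible, and this does not imply $\rho(A')<1$. For example, $A'=2I_n$ gives $\mathcal{I}-\mathcal{F}_{A'}=-3\,\mathcal{I}$, which is invertible, and the corresponding ``solution'' $-I_n/3$ is not positive definite. So stability of $A'$ needs a separate argument. Your homotopy along $A+s(A'-A)$ supplies it cleanly: it combines the uniform a priori bound $\|\Sigma(A_s)\|\le 2\|\Sigma(A)\|$ with the blow-up estimate $\|\Sigma(A_s)\|\ge 1/(1-\rho(A_s)^2)$. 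In fact it gives the quantitative margin $\rho(A')^2\le 1-1/(2\|\Sigma(A)\|)$. Stating that margin directly, and extending it to the supremum by continuity of $\rho$, would read more cleanly than the ``not attained'' case split.

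Your reading of $a_m$ as an upper bound on $\|A\|$, rather than on the open-loop $\|A_t\|$ of Assumption~\ref{ass:bounded}, is also the right one. That is how the lemma is actually invoked in the proof of Lemma~\ref{lem:cost_diff_model}, where $1+a_m$ is replaced by $1+\|A_t+B_tK\|$.
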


At time $t$, consider the policy gradient update with estimated $(\hat{A}_t, \hat{B}_t)$
\begin{equation}\label{equ:pg_appr}
	K' = K - \eta \nabla \hat{C}_t(K),
\end{equation}
and the update with the ground-truth $(A_t,B_t)$
$$
	K'' = K - \eta \nabla {C}_t(K).
$$
It follows from \cite{fazel2018global} that, if $\eta \leq 1/l_t$, then
\begin{equation}\label{equ:exact}
	C_t(K'') - C_t(K) \leq  -\frac{\eta}{2\mu_t} (C_t(K) - C_t^*), 
\end{equation} 
where $l_t$ is the smoothness parameter and $\mu_t$ is the gradient dominance constant corresponding to $(A_t, B_t)$.
To show the convergence of \eqref{equ:pg_appr}, we first quantify the distance between the exact gradient $\nabla {C}_t(K)$ and the {approximate} gradient {$\nabla\hat{C}_t(K)$}. Let $p_1$ be a scalar function 
$$
p_1 = \frac{8C_t^3(K)}{\underline{\sigma}^2(Q)}\left(1+\frac{C_t(K)}{\underline{\sigma}(Q)}\right) \left(1+\sqrt{\frac{C_t(K)}{\underline{\sigma}(R)}}\right),
$$
and $p_2 = {C_t^2(K) }/({\underline{\sigma}(Q)p_1})$. Then, we have the following results, which can be proved using Lemma \ref{lem:perturb} and following similar derivation of \cite[Lemmas 12-14]{zhao2025policy}. Define $\mathcal{S}_t:=\{K\in \mathbb{R}^{m\times n}|\rho(A_t+B_tK)<1\}$.
\begin{lemma}\label{lem:grad_diff}
	Let $K \in \mathcal{S}_t$. Then, there exists a polynomial  
	$p_3 = \text{poly}(C_t(K)/\underline{\sigma}(Q), a_m, b_m, \|R\|, 1/\underline{\sigma}(R))$ such that, if {$\epsilon \leq p_2$}, then $
	\| \nabla{C}_t(K) - \nabla\hat{C}_t(K) \| \leq  {p_3 } {\epsilon}.$
\end{lemma}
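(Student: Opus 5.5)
The plan is to write both gradients through Lemma~\ref{lem:mb_pg} and split their difference into a covariance part and a Riccati-type part. Let $\hat\Sigma,\hat P$ be the solutions of \eqref{equ:Sigma} and \eqref{equ:Lyap} with $(A_t,B_t)$ replaced by $(\hat A_t,\hat B_t)$. Define
\[
E_K:=(R+B_t^\top PB_t)K+B_t^\top PA_t,\qquad \hat E_K:=(R+\hat B_t^\top\hat P\hat B_t)K+\hat B_t^\top\hat P\hat A_t .
\]
Then
\[
\nabla C_t(K)-\nabla\hat C_t(K)=2E_K(\Sigma-\hat\Sigma)+2(E_K-\hat E_K)\hat\Sigma .
\]
This reduces the claim to three perturbation estimates: one for $\|\Sigma-\hat\Sigma\|$, one for $\|P-\hat P\|$, and one for the model mismatch, all of which are polynomial in $C_t(K)$ and linear in $\epsilon$.

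\textbf{Step 1 (closed-loop perturbation and stability).} Put $A_K:=A_t+B_tK$ and $\hat A_K:=\hat A_t+\hat B_tK$. By Lemma~\ref{lem:id_error},
\[
\|\hat A_K-A_K\|\le \|[\hat B_t,\hat A_t]-[B_t,A_t]\|\,\bigl(1+\|K\|\bigr)\le \epsilon\bigl(1+\sqrt{C_t(K)/\underline\sigma(R)}\bigr),
\]
where the last factor comes from Lemma~\ref{lem:bounds}(iii). Lemma~\ref{lem:bounds}(i) gives $\|\Sigma\|\le C_t(K)/\underline\sigma(Q)$. The condition $\epsilon\le p_2$ is designed so that this perturbation is below the threshold $1/(4\|\Sigma\|(1+a_m))$ of Lemma~\ref{lem:perturb}. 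Checking this means verifying that $p_2=C_t^2(K)/(\underline\sigma(Q)p_1)$ dominates the required quantity, using $C_t(K)\ge \mathrm{Tr}(Q)\ge\underline\sigma(Q)$ and $1+a_m\le 1+C_t/\underline\sigma(Q)$ (or absorbing $a_m$). Lemma~\ref{lem:perturb} then shows that $K$ stabilizes the estimated model and that
\[
\|\hat\Sigma-\Sigma\|\le 4\|\Sigma\|^2(1+a_m)\|\hat A_K-A_K\|=\mathcal O(\epsilon).
\]
It also gives $\|\hat\Sigma\|\le 2\|\Sigma\|$.

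\textbf{Step 2 ($P$ perturbation).} $P$ solves the dual Lyapunov equation $P=Q_K+A_K^\top PA_K$ with $Q_K=Q+K^\top RK$, so $P=\mathcal T_{A_K}(Q_K)$ is a linear operator applied to $Q_K$. The operator norm of $\mathcal T_{A_K}$ on the PSD cone equals $\|\Sigma(A_K)\|$, since it is the adjoint of the $\Sigma$-operator. The difference satisfies
\[
\hat P-P=\mathcal T_{\hat A_K}\bigl(\hat A_K^\top P\hat A_K-A_K^\top PA_K\bigr).
\]
This yields
\[
\|\hat P-P\|\le \|\hat\Sigma\|\,\|P\|\,\|\hat A_K-A_K\|\,(2a_m+2b_m\|K\|+1).
\]
By Lemma~\ref{lem:bounds}(ii) and Step~1, the right-hand side is a polynomial in $C_t(K)/\underline\sigma(Q)$, $a_m$, $b_m$, $1/\underline\sigma(R)$ times $\epsilon$. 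This mirrors \cite[Lemma 13]{zhao2025policy}.

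\textbf{Step 3 (assemble).} Bound $\|E_K-\hat E_K\|$ by expanding the products $\hat B_t^\top\hat P\hat B_t-B_t^\top PB_t$ and $\hat B_t^\top\hat P\hat A_t-B_t^\top PA_t$ telescopically. Each resulting term is bounded by $\epsilon$, by $\|\hat P-P\|$, or by norms such as $\|\hat B_t\|\le b_m+\epsilon\le b_m+1$. Bound $\|E_K\|$ by $(\|R\|+b_m^2C_t)\|K\|+b_ma_mC_t$. Multiplying these with the estimates from Steps~1 and~2 gives $\|\nabla C_t-\nabla\hat C_t\|\le p_3\epsilon$ with $p_3$ polynomial as claimed.

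\textbf{Main obstacle.} The delicate point is Step~1: confirming that the specific $p_2$ (through $p_1$) truly implies the hypothesis of Lemma~\ref{lem:perturb}. If it falls short by a constant, I would first try absorbing $a_m$ using $\epsilon\le1$, and otherwise shrink $p_2$, since only its polynomial form matters downstream. Everything else is routine bookkeeping of polynomial constants.
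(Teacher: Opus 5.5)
Your plan follows the route the paper indicates: it says only that the lemma follows from Lemma~\ref{lem:perturb} and the derivation of \cite[Lemmas 12--14]{zhao2025policy}, i.e., perturb $\Sigma$ and $P$ by Lyapunov perturbation bounds and assemble via $\nabla C_t(K)-\nabla\hat C_t(K)=2E_K(\Sigma-\hat\Sigma)+2(E_K-\hat E_K)\hat\Sigma$. The argument goes through, but two steps as you wrote them need correcting.

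\textbf{Step 1.} The inequality $1+a_m\le 1+C_t(K)/\underline{\sigma}(Q)$ is false for the open-loop bound $a_m$. It fails, for instance, when $R$ is small and $K$ cancels a large $A_t$. Absorbing $a_m$ is not an option either, since $p_2$ contains no $a_m$. What Lemma~\ref{lem:perturb} actually requires is the norm of the stable matrix being perturbed, here $A_K=A_t+B_tK$; this is also how the paper uses it in the proof of Lemma~\ref{lem:cost_diff_model}. From $\Sigma\succeq I+A_KA_K^\top$ you get $\|A_K\|\le\sqrt{\|\Sigma\|}\le C_t(K)/\underline{\sigma}(Q)$. Then $p_2=\underline{\sigma}(Q)/\bigl(8C_t(K)(1+C_t(K)/\underline{\sigma}(Q))(1+\sqrt{C_t(K)/\underline{\sigma}(R)})\bigr)$ is exactly half the threshold required by Lemma~\ref{lem:perturb}. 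So your ``main obstacle'' disappears, and no shrinking of $p_2$ is needed (shrinking it would change the statement anyway). It also gives $\epsilon\le p_2\le 1/8$, which you use implicitly in Step~3.

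\textbf{Step 2.} You claim that your $\mathcal T_{A_K}$, $X\mapsto\sum_k(A_K^\top)^kXA_K^k$, has spectral operator norm $\|\Sigma(A_K)\|$ because it is the adjoint of the $\Sigma$-operator. This is wrong. Adjointness in the trace inner product identifies the spectral-to-spectral norm of one map with the nuclear-to-nuclear norm of the other, not with its spectral-to-spectral norm.

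By positivity, the norm of your operator is $\|\mathcal T_{A_K}(I)\|=\|\sum_k(A_K^\top)^kA_K^k\|$. This matrix shares its trace with $\Sigma(A_K)=\sum_kA_K^k(A_K^\top)^k$ but not, in general, its largest eigenvalue. For the nilpotent $A$ with $Ae_1=2e_2$, $Ae_2=0.1e_3$, $Ae_3=0$, the two sums are $\mathrm{diag}(5.04,1.01,1)$ and $\mathrm{diag}(1,5,1.05)$, so the bound you need fails.

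The repair is easy:
\begin{itemize}
\item Since $Q_K\succeq\underline{\sigma}(Q)I$, positivity gives $\mathcal T_{\hat A_K}(I)\preceq\hat P/\underline{\sigma}(Q)$.
\item $\|\hat P\|\le\mathrm{Tr}(\hat P)=\hat C_t(K)=\mathrm{Tr}(Q_K\hat\Sigma)\le\mathrm{Tr}(Q_K)\|\hat\Sigma\|$.
\item Using $\mathrm{Tr}(Q_K)\le\mathrm{Tr}(Q_K\Sigma)=C_t(K)$ and your $\|\hat\Sigma\|\le 2\|\Sigma\|$, this is at most $2C_t(K)^2/\underline{\sigma}(Q)$.
\end{itemize}
Replacing $\|\hat\Sigma\|$ by $2C_t(K)^2/\underline{\sigma}(Q)^2$ in your bound on $\|\hat P-P\|$ keeps it polynomial in $C_t(K)/\underline{\sigma}(Q)$. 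The assembly in Step~3 is then fine.
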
 
\begin{lemma}\label{lem:cost_diff}
	Let $K \in \mathcal{S}_t$. There exist polynomials $p_4 = \text{poly}(C_t(K)/\underline{\sigma}(Q), a_m^{-1}, b_m^{-1}, \|R\|^{-1}, \underline{\sigma}(R) )$ and $p_5,p_6 = \text{poly}(C_t(K)/\underline{\sigma}(Q), a_m, b_m, \|R\|, 1/\underline{\sigma}(R))$ such that, if $\|\widetilde{K} - K\| \leq p_4$, then  $\widetilde{K} \in \mathcal{S}_t$ and
	$$
	\|\widetilde{\Sigma} - \Sigma\| \leq p_5\|\widetilde{K} - K\|, ~~
	|C_t(\widetilde{K}) - C_t(K)| \leq p_6\|\widetilde{K} - K\|.
	$$ 
\end{lemma}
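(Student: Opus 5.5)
The statement to prove is Lemma~\ref{lem:cost_diff}: a small perturbation of a stabilizing gain stays stabilizing, and $\Sigma$ and $C_t$ are Lipschitz in $K$ on a neighbourhood whose radius is polynomial in $C_t(K)$. The plan is to reduce everything to Lemma~\ref{lem:perturb}, applied to the closed-loop matrices $A_t+B_tK$ and $A_t+B_t\widetilde K$, and then to control the cost difference through the trace formula \eqref{equ:transfer}.

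\emph{Step 1 (stability of $\widetilde K$).} Write $\|(A_t+B_t\widetilde K)-(A_t+B_tK)\|\le b_m\|\widetilde K-K\|$ using Assumption~\ref{ass:bounded}. By Lemma~\ref{lem:bounds}(i), $\|\Sigma\|\le C_t(K)/\underline{\sigma}(Q)$. Hence the hypothesis of Lemma~\ref{lem:perturb} holds as soon as
\[
\|\widetilde K-K\|\le p_4:=\frac{\underline{\sigma}(Q)}{4\,C_t(K)(1+a_m)\,b_m}.
\]
There is one technicality: the constant in Lemma~\ref{lem:perturb} uses $a_m$ as a bound on the closed-loop matrix norm. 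I would interpret it as $a_m+b_m\|K\|$ and bound $\|K\|$ via Lemma~\ref{lem:bounds}(iii), which keeps $p_4$ polynomial in the stated inverse quantities. Then $\widetilde K\in\mathcal S_t$.

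\emph{Step 2 (Lipschitz bound on $\Sigma$).} Lemma~\ref{lem:perturb} gives
\[
\|\widetilde\Sigma-\Sigma\|\le 4\|\Sigma\|^2(1+a_m)\,b_m\|\widetilde K-K\|.
\]
Substituting the bound on $\|\Sigma\|$ yields $p_5=4(C_t(K)/\underline{\sigma}(Q))^2(1+a_m)b_m$. A useful by-product is $\|\widetilde\Sigma\|\le 2\|\Sigma\|$ on this ball, since $p_5p_4\le\|\Sigma\|$.

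\emph{Step 3 (Lipschitz bound on the cost).} Decompose
\[
C_t(\widetilde K)-C_t(K)=\mathrm{Tr}\big((Q+K^\top RK)(\widetilde\Sigma-\Sigma)\big)+\mathrm{Tr}\big((\widetilde K^\top R\widetilde K-K^\top RK)\widetilde\Sigma\big).
\]
For the first term, use $|\mathrm{Tr}(MN)|\le n\|M\|\|N\|$ together with $\|Q+K^\top RK\|\le C_t(K)$. The latter holds because $P\succeq Q+K^\top RK$ and $\|P\|\le C_t(K)$ by Lemma~\ref{lem:bounds}(ii). For the second term, write $\widetilde K^\top R\widetilde K-K^\top RK=(\widetilde K-K)^\top R\widetilde K+K^\top R(\widetilde K-K)$. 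Bound it by $\|R\|(2\|K\|+p_4)\|\widetilde K-K\|$, with $\|K\|$ controlled by Lemma~\ref{lem:bounds}(iii), and use $\|\widetilde\Sigma\|\le 2C_t(K)/\underline{\sigma}(Q)$. Collecting the terms gives $p_6$ as a polynomial in $C_t(K)/\underline{\sigma}(Q)$, $a_m$, $b_m$, $\|R\|$ and $1/\underline{\sigma}(R)$. The dimension factor $n$ can be absorbed, or avoided by using $\|\cdot\|_F$ and Cauchy--Schwarz.

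The main obstacle is bookkeeping rather than ideas. The only delicate points are making the stability radius in Lemma~\ref{lem:perturb} uniform, since the closed-loop norm involves $\|K\|$, and checking that all the resulting constants have the claimed polynomial dependence (including the inverse dependence for $p_4$). Everything else follows the pattern of \cite[Lemma 13]{zhao2025policy}.
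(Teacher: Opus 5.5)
Your proposal is correct and follows the route the paper indicates. It applies Lemma~\ref{lem:perturb} to $A_t+B_tK$ versus $A_t+B_t\widetilde K$, then splits the cost difference by traces as in \cite[Lemma 13]{zhao2025policy}, and you rightly read $a_m$ in Lemma~\ref{lem:perturb} as a closed-loop norm bound (e.g.\ $a_m+b_m\|K\|$, or $\|\Sigma\|^{1/2}$). One small slip: $p_5p_4=C_t(K)/\underline{\sigma}(Q)\geq\|\Sigma\|$ rather than $\leq$, but the bound you actually need, $\|\widetilde\Sigma\|\leq 2\|\Sigma\|\leq 2C_t(K)/\underline{\sigma}(Q)$, follows directly because the hypothesis of Lemma~\ref{lem:perturb} already gives $\|\widetilde\Sigma-\Sigma\|\leq\|\Sigma\|$.
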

\begin{lemma}\label{lem:bias}
	Let $K \in \mathcal{S}_t$. There exists a polynomial $p_7$ in $({\underline{\sigma}(Q)}/{C_t(K)}, a_m^{-1},  b_m^{-1},  \|R\|^{-1}, \underline{\sigma}(R))$ such that, if
	$$
	{\epsilon} \leq  p_2 ~~\text{and}~~ \eta \leq \min\left\{\frac{p_4 }{p_3 {\epsilon}}, p_7\right\},
	$$
	then it holds that
	$
	\left| C_t(K'') - C_t(K')\right| \leq  \eta p_3p_6 {\epsilon}.
	$
\end{lemma}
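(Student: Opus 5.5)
The bound follows by writing $K'$ as a small perturbation of $K''$ and applying the cost-Lipschitz estimate of Lemma~\ref{lem:cost_diff}. Since both updates start from the same $K$, subtracting the two gradient steps gives
\[
K' - K'' = \eta\big(\nabla C_t(K) - \nabla \hat C_t(K)\big).
\]
Because $K\in\mathcal{S}_t$ and $\epsilon\leq p_2$, Lemma~\ref{lem:grad_diff} applies, so
\[
\|K'-K''\|\leq \eta p_3\epsilon .
\]
The step-size condition $\eta\leq p_4/(p_3\epsilon)$ then gives $\|K'-K''\|\leq p_4$. This is exactly the perturbation radius needed to invoke Lemma~\ref{lem:cost_diff}.

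The main obstacle is the centre point. Lemma~\ref{lem:cost_diff} has to be used around $K''$, not around $K$, and its polynomials $p_4,p_6$ are evaluated at $C_t(K'')$, whereas the hypotheses of Lemma~\ref{lem:bias} are stated in terms of $C_t(K)$. I would handle this with the condition $\eta\leq p_7$, choosing $p_7\leq 1/l_t$, where $l_t$ is the smoothness constant. This choice is possible because in \cite{fazel2018global} the smoothness constant on the sublevel set $\{K': C_t(K')\leq C_t(K)\}$ is polynomial in $C_t(K)/\underline{\sigma}(Q)$ and the system bounds. Then \eqref{equ:exact} gives $K''\in\mathcal{S}_t$ and $C_t(K'')\leq C_t(K)$.

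Since $p_4$ is decreasing in the cost argument and $p_6$ is increasing, monotonicity lets me replace $p_4(C_t(K''))$ by the smaller $p_4(C_t(K))$ and $p_6(C_t(K''))$ by the larger $p_6(C_t(K))$. Both replacements go in the safe direction, so the radius condition still holds and the bound only loosens. If this monotonicity is not immediate from the polynomial forms, a fallback is to apply Lemma~\ref{lem:cost_diff} twice, once from $K$ to $K''$ and once from $K$ to $K'$. Bounding $\|K''-K\|\leq \eta\|\nabla C_t(K)\|$ is small by $p_7$, but that route brings in extra terms, so I prefer the sublevel-set argument.

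With $K''$ as the centre, Lemma~\ref{lem:cost_diff} yields $K'\in\mathcal{S}_t$ and
\[
|C_t(K')-C_t(K'')|\leq p_6\|K'-K''\|\leq \eta p_3p_6\epsilon ,
\]
which is the claim. Finally, $p_7$ must depend only on $\underline{\sigma}(Q)/C_t(K)$, $a_m^{-1}$, $b_m^{-1}$, $\|R\|^{-1}$ and $\underline{\sigma}(R)$. This holds if I take $p_7$ as the reciprocal of the polynomial smoothness bound from \cite{fazel2018global}, possibly shrunk further so that stability of $K''$ is guaranteed.
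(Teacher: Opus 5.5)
Your argument is correct and follows the derivation the paper defers to (\cite[Lemmas 12--14]{zhao2025policy}, after \cite{fazel2018global}). Lemma~\ref{lem:grad_diff} together with $\eta\le p_4/(p_3\epsilon)$ gives $\|K'-K''\|\le \eta p_3\epsilon\le p_4$. The condition $\eta\le p_7$, taken below $1/l_t$ and the usual stability step bound, gives $K''\in\mathcal{S}_t$ and, via \eqref{equ:exact}, $C_t(K'')\le C_t(K)$. Lemma~\ref{lem:cost_diff} centred at $K''$ then yields $|C_t(K')-C_t(K'')|\le \eta p_3p_6\epsilon$, because the radius $p_4$ decreases and $p_6$ increases with the cost. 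That monotonicity is the right reading: the radius genuinely scales like $\underline{\sigma}(Q)/C_t(K)$, even though Lemma~\ref{lem:cost_diff} lists $C_t(K)/\underline{\sigma}(Q)$ among its arguments.

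Your fallback of centring both applications at $K$ would only bound the difference by $\eta p_6(\|\nabla C_t(K)\|+\|\nabla\hat C_t(K)\|)$, which is not proportional to $\epsilon$. So the sublevel-set route is the one that actually proves the claim.
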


With Lemma \ref{lem:bias} and \eqref{equ:exact}, we show the convergence of \eqref{equ:pg_appr}
\begin{equation}\label{equ:appro}
	C_t(K') - C_t(K) \leq  -\frac{\eta}{2\mu_t} (C_t(K) - C_t^*) + \eta p_3p_6 {\epsilon}.  
\end{equation}

To show the convergence of Algorithm \ref{algo2_switch}, we need to first prove that $C_t(K_t)$ is uniformly upper-bounded, such that the polynomials $p_i, i\in\{1,2,\dots, 7\}$ have {uniform} bounds. Define $\underline{C^*}:=\min_{t\geq t_0} {C_t^*}$ and $\overline{C^*}:=\max_{t\geq t_0} {C_t^*}$, which exists as $(A_t, B_t)$ is uniformly bounded according to Assumption \ref{ass:bounded}. Let $\underline{l} := l(\underline{C^*})$, $\underline{\mu} := {\underline{C^*}}/{(\underline{\sigma}(R)\|Q\|)}$, and 
\begin{equation}\label{equ:bound_cost}
	\overline{C} := \overline{C^*} + 2 + \frac{1}{\underline{l}\underline{\mu}} + C_{t_0}(K_{t_0}).
\end{equation}

\begin{lemma}\label{lem:cost_diff_model}
	Let $K \in \mathcal{S}_t$. There exist polynomials $p_9 = \text{poly}(1/(C_t(K))^{1/2}, a_m^{-1}, b_m^{-1}, \|R\|^{-1}, \underline{\sigma}(R), \underline{\sigma}(Q) )$ and $p_{10} = \text{poly}(C_t(K)/\underline{\sigma}(Q), a_m, b_m, \|R\|, 1/\underline{\sigma}(R))$ such that, if $\delta \leq p_9$, then  $K \in \mathcal{S}_{t+1}$ and
	$$ 
	|C_{t+1}(K) - C_t(K)| \leq p_{10}\delta.
	$$ 
\end{lemma}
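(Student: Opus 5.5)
The statement to prove is Lemma \ref{lem:cost_diff_model}. I will treat it as a model-perturbation analogue of Lemma \ref{lem:cost_diff}: instead of perturbing $K$, the gain is fixed and the system matrices move from $(A_t,B_t)$ to $(A_{t+1},B_{t+1})$. By Assumption \ref{assum:variation}, $\|[B_{t+1}-B_t,\ A_{t+1}-A_t]\|\leq\delta$.

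\textbf{Step 1: closed-loop perturbation.} Write $A_K:=A_t+B_tK$ and $A_K':=A_{t+1}+B_{t+1}K$. Then
\[
\|A_K'-A_K\|\leq \|A_{t+1}-A_t\|+\|B_{t+1}-B_t\|\|K\|\leq \delta\bigl(1+\|K\|\bigr).
\]
Lemma \ref{lem:bounds}(iii) gives $\|K\|\leq (C_t(K)/\underline{\sigma}(R))^{1/2}$, so
\[
\|A_K'-A_K\|\leq \delta\bigl(1+(C_t(K)/\underline{\sigma}(R))^{1/2}\bigr).
\]

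\textbf{Step 2: stability and covariance bound.} I apply Lemma \ref{lem:perturb} with $A=A_K$ and $A'=A_K'$, using $\|\Sigma(A_K)\|\leq C_t(K)/\underline{\sigma}(Q)$ from Lemma \ref{lem:bounds}(i). The lemma's condition holds provided
\[
\delta\bigl(1+(C_t(K)/\underline{\sigma}(R))^{1/2}\bigr)\leq \frac{\underline{\sigma}(Q)}{4C_t(K)(1+a_m)}.
\]
I choose $p_9$ as the right-hand side divided by $1+(C_t(K)/\underline{\sigma}(R))^{1/2}$, possibly shrunk by a further factor to serve Step 3. The $a_m$ in Lemma \ref{lem:perturb} is the uniform bound on the closed-loop norm, which itself involves $b_m\|K\|$, so in the final constant I replace it by $a_m+b_m\|K\|$. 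The resulting $p_9$ depends on $C_t(K)^{-1/2}$-type terms, $\underline{\sigma}(Q)$, $\underline{\sigma}(R)$, and inverse powers of $a_m,b_m$, which matches the stated form. With $\delta\leq p_9$, Lemma \ref{lem:perturb} yields $\rho(A_K')<1$, i.e.\ $K\in\mathcal{S}_{t+1}$, together with
\[
\|\Sigma_{t+1}(K)-\Sigma_t(K)\|\leq 4\Bigl(\frac{C_t(K)}{\underline{\sigma}(Q)}\Bigr)^2(1+a_m)\,\delta\bigl(1+\|K\|\bigr).
\]

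\textbf{Step 3: cost difference.} Since $Q+K^\top RK$ does not depend on the model,
\[
|C_{t+1}(K)-C_t(K)|=\bigl|\mathrm{Tr}\bigl((Q+K^\top RK)(\Sigma_{t+1}-\Sigma_t)\bigr)\bigr|\leq n\,\|Q+K^\top RK\|\,\|\Sigma_{t+1}-\Sigma_t\|.
\]
I would bound $\|Q+K^\top RK\|\leq \|Q\|+\|R\|C_t(K)/\underline{\sigma}(R)$. For the $\|Q\|$ term, note $\|Q\|\leq C_t(K)$ because $\Sigma\succeq I$, so it is absorbed into powers of $C_t(K)/\underline{\sigma}(Q)$ up to a factor $\underline{\sigma}(Q)$. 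An alternative that avoids $n$ is to bound via $\mathrm{Tr}((Q+K^\top RK)\Sigma_t)=C_t(K)$ and the relative perturbation $\|\Sigma_t^{-1/2}(\Sigma_{t+1}-\Sigma_t)\Sigma_t^{-1/2}\|$. Either way, the product of these factors is a polynomial $p_{10}$ in $C_t(K)/\underline{\sigma}(Q)$, $a_m$, $b_m$, $\|R\|$, $1/\underline{\sigma}(R)$, and the bound is linear in $\delta$, as claimed.

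\textbf{Main obstacle.} The only delicate point is bookkeeping. The constant "$a_m$" in Lemma \ref{lem:perturb} must bound the closed-loop matrix norm, so I need $\|A_K\|\leq a_m+b_m\|K\|$. This brings $\|K\|$, and hence $C_t(K)$, into both $p_9$ and $p_{10}$. I also have to check that this dependence keeps the monotone direction the statement asserts: $p_9$ decreasing in $C_t(K)$ and $p_{10}$ increasing. Both hold because each factor is monotone in $C_t(K)$. Everything else is direct substitution into Lemmas \ref{lem:bounds} and \ref{lem:perturb}.
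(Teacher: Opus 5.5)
Your proposal is correct and follows the paper's proof essentially step for step. You bound the closed-loop perturbation by $\delta(1+\|K\|)$, invoke Lemma~\ref{lem:perturb} with $\|\Sigma_t\|\le C_t(K)/\underline{\sigma}(Q)$ to obtain $K\in\mathcal{S}_{t+1}$ and a covariance bound linear in $\delta$, and then control the cost difference through the trace, exactly as the paper does. The differences are cosmetic: the paper bounds the closed-loop norm via $\|\Sigma_t\|$ (using $\Sigma_t\succeq I+(A_t+B_tK)(A_t+B_tK)^\top$) instead of $a_m+b_m\|K\|$, and it avoids your factor $n$ by writing $|\mathrm{Tr}((Q+K^\top RK)(\Sigma_{t+1}-\Sigma_t))|\le \mathrm{Tr}(Q+K^\top RK)\,\|\Sigma_{t+1}-\Sigma_t\|$ and using $\mathrm{Tr}(Q+K^\top RK)\le C_t(K)$.
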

\begin{proof}
The closed-loop matrix variation is $\|A_{t+1}+B_{t+1}K - (A_{t}+B_{t}K) \| \leq \delta(1+\|K\|_F)$. By Lemma \ref{lem:perturb}, if
$$
\delta \leq \frac{1}{4(1 + \|K\|_F)\|\Sigma_t\|(1+\|\Sigma_t\|)},
$$
then 
$
\|\Sigma_{t+1} - \Sigma_t\|  \leq  4\delta \|\Sigma_t\|^2(1+\|A_t + B_tK\|)(1+\|K\|_F)
$.
Noting
$
	|C_{t+1}(K) - C_t(K)|  \leq \text{Tr}(Q+K^{\top}RK)\|\Sigma_{t+1} - \Sigma_t\|  
$
and that both $\Sigma_t$ and $\|K\|_F$ are upper bounded by $C_t(K)$, the proof {is complete}.
\end{proof}

By Assumption \ref{ass:bounded}, {the gradient-dominance constants are uniformly upper bounded}, i.e., $\mu_t \leq \bar{\mu}$ for some constant $\bar{\mu}$. Since the quantities $l$ and $p_i$ are {functions} of $C(K)$,  let $\bar{l}, \bar{p}_1, \underline{p}_2, \bar{p}_3, \bar{p}_5, \bar{p}_6, \underline{p}_7, \underline{p}_9, \bar{p}_{10}$ be the associated quantities at {$\overline{C}+1$}, and $\underline{p}_4$ be the quantity at $\underline{C^*}$.  Then, we have the following results.

\begin{lemma}[Boundedness of the cost]\label{lem:bounded}
	If
	\begin{equation}\label{equ:cond}
		\begin{aligned}
			\delta &\leq  \min\left\{\underline{p}_9, {\frac{1}{\bar p_{10}},} \frac{1}{\bar{p}_{10}(1+2\bar{l}\bar{\mu})}\right\}, \\
			{\epsilon} &\leq \min\left\{\underline{p}_2,\frac{1}{2\bar{p}_6\bar{p}_3\bar{\mu}}\right\},\\
			\eta &\leq \min\left\{\frac{\underline{p}_4}{\bar{p}_3{\epsilon}}, \underline{p}_7, \frac{1}{\bar{l}} \right\},
		\end{aligned} 
	\end{equation}
	then $C_t(K_t)$ of {Algorithm~\ref{algo2_switch}} has a uniform upper bound, i.e.,
	$C_t(K_t) \leq \overline{C}$ with $\overline{C}$ defined in \eqref{equ:bound_cost}.
\end{lemma}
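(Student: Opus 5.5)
We argue by induction on $t\ge t_0$ that $C_t(K_t)\le \overline{C}$ and $K_t\in\mathcal{S}_t$. The base case holds by the initialization, since $C_{t_0}(K_{t_0})\le\overline{C}$ by \eqref{equ:bound_cost}. For the inductive step, assume $K_t\in\mathcal{S}_t$ with $C_t(K_t)\le\overline{C}$. All the polynomials $p_i$ are monotone in $C_t(K)$, so their values at $\overline{C}+1$ (or at $\underline{C^*}$ for $p_4$) are uniformly valid. Hence the conditions \eqref{equ:cond} imply the hypotheses of Lemmas~\ref{lem:grad_diff}--\ref{lem:bias} at $K=K_t$ with the estimate $(\hat A_{t+1},\hat B_{t+1})$. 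Lemma~\ref{lem:id_error} gives the identification error bound $\epsilon$ at time $t+1$, but the gradient step in \eqref{equ:mbgd} is taken at $K_t$ against the model of time $t+1$. So we first move $K_t$ to the frozen system at $t+1$. Lemma~\ref{lem:cost_diff_model} with $\delta\le\underline{p}_9$ gives $K_t\in\mathcal{S}_{t+1}$ and $C_{t+1}(K_t)\le C_t(K_t)+\bar p_{10}\delta\le \overline{C}+1$, using $\delta\le 1/\bar p_{10}$. The constants evaluated at $\overline{C}+1$ therefore still apply.

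Next we apply the one-step descent \eqref{equ:appro} at time $t+1$. With $\eta\le 1/\bar l$, $\eta\le \underline{p}_7$ and $\eta\le \underline p_4/(\bar p_3\epsilon)$, Lemma~\ref{lem:bias} and \eqref{equ:exact} give $K_{t+1}\in\mathcal{S}_{t+1}$ and
\[
C_{t+1}(K_{t+1})\le C_{t+1}(K_t)-\frac{\eta}{2\bar\mu}\big(C_{t+1}(K_t)-C_{t+1}^*\big)+\eta\bar p_3\bar p_6\epsilon .
\]
Membership in $\mathcal{S}_{t+1}$ follows from the step size $\|K_{t+1}-K_t\|\le\eta\bar p_3\epsilon+\eta\|\nabla C_{t+1}(K_t)\|$ being below $p_4$ via Lemma~\ref{lem:cost_diff}, as in the proof of Lemma~\ref{lem:bias}. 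Substituting $C_{t+1}(K_t)\le C_t(K_t)+\bar p_{10}\delta$, and writing $c:=C_t(K_t)$, we get
\[
C_{t+1}(K_{t+1})\le \Big(1-\frac{\eta}{2\bar\mu}\Big)(c+\bar p_{10}\delta)+\frac{\eta}{2\bar\mu}\overline{C^*}+\eta\bar p_3\bar p_6\epsilon .
\]

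The final step shows that the right-hand side is at most $\overline{C}$, and this is the main obstacle. We split into two cases. If $c$ is large, say $c\ge \overline{C^*}+2$, the contraction term $\frac{\eta}{2\bar\mu}(c-\overline{C^*})\ge \frac{\eta}{\bar\mu}$ must dominate the perturbations $\bar p_{10}\delta+\eta\bar p_3\bar p_6\epsilon$. The condition $\epsilon\le 1/(2\bar p_6\bar p_3\bar\mu)$ makes the $\epsilon$-term at most $\eta/(2\bar\mu)$. The $\delta$-term is not multiplied by $\eta$, so we need $\bar p_{10}\delta\lesssim \eta/(2\bar\mu)$. Here the condition $\delta\le 1/(\bar p_{10}(1+2\bar l\bar\mu))$ is meant to enter, combined with $\eta$ being at its admissible level $\sim 1/\bar l$. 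This is the delicate point: as stated, $\eta$ is only upper bounded. I would handle it with an additive slack instead of strict decrease, which is why $\overline{C}$ contains $1/(\underline l\underline\mu)$. If $c$ is small, $c<\overline{C^*}+2$, the new cost is at most $c+\bar p_{10}\delta+\eta\bar p_3\bar p_6\epsilon\le \overline{C^*}+2+\bar p_{10}\delta+1/(2\bar l\bar\mu)$. This is at most $\overline{C}$ by the choice of the term $1/(\underline l\underline \mu)$ in \eqref{equ:bound_cost} and $\bar p_{10}\delta\le 1/(1+2\bar l\bar\mu)$. Pinning down these constants so that both cases close is where I expect the most bookkeeping. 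If the large-$c$ case fails to close for small $\eta$, I would fall back on the cruder invariant $C_t(K_t)\le\overline{C}+1$ and re-evaluate the constants there.
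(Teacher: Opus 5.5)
You follow the paper's route: induction on $C_t(K_t)\le\overline{C}$, moving $K_t$ to the mode at $t+1$ via Lemma~\ref{lem:cost_diff_model}, the perturbed descent inequality, and the case split at $\overline{C^*}+2$. Your small-cost case is fine. The gap is the large-cost case, which you leave open, and neither proposed repair closes it. The drift enters as $\bar p_{10}\delta$ per step with no factor $\eta$, while the contraction at a cost level $\Theta$ is only $\frac{\eta}{2\bar\mu}(\Theta-\overline{C^*})$. A per-step additive slack accumulates over time rather than being absorbed once into $\overline{C}$. Any invariant level, $\overline{C}+1$ included, would need $\Theta-\overline{C^*}\gtrsim 1+2\bar\mu\bar p_{10}\delta/\eta$, which is unbounded as $\eta\downarrow 0$ with $\delta$ fixed. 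With $\eta$ only bounded above, the conclusion can in fact fail. Take $n=m=1$, $b_t\equiv 1$, $K_{t_0}=0$, and $a_t$ ramping up from $0$ by $\delta$ per step up to $a_m=2$. While the cost stays below $\overline{C}$, the gain moves at most $\eta G$ per step, where $G$ is a uniform bound on the estimated gradient on that set. So for $\eta G\le\delta/2$, $a_t+K_t$ reaches $1$ within about $2/\delta$ steps, and the cost exceeds any $\eta$-independent $\overline{C}$.

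Your suspicion is therefore correct, and the paper's proof has the same hole. It passes this point via $\frac{\eta}{2\bar\mu}+\frac{\eta\bar p_{10}\delta}{2\bar\mu}+\bar p_{10}\delta\le\frac{\eta}{\bar\mu}$, which is equivalent to $\delta\le\eta/\bigl(\bar p_{10}(\eta+2\bar\mu)\bigr)$. Since $\eta/(\eta+2\bar\mu)$ increases in $\eta$, the stated $\delta\le 1/\bigl(\bar p_{10}(1+2\bar l\bar\mu)\bigr)$ gives this only when $\eta=1/\bar l$. The missing ingredient is a hypothesis coupling $\delta$ to $\eta$, e.g.\ $\bar p_{10}\delta\le\eta/(2\bar\mu)$, or fixing $\eta=1/\bar l$; this is also the scaling reflected by the $\nu_8\delta/\eta$ term in Theorem~\ref{thm:indirect}. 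With it, your display and $\eta\bar p_3\bar p_6\epsilon\le\eta/(2\bar\mu)$ give $C_{t+1}(K_{t+1})\le c-\frac{\eta}{2\bar\mu}\bigl(c-\overline{C^*}-2\bigr)$. Hence $C_{t+1}(K_{t+1})\le\max\{c,\overline{C^*}+2\}\le\overline{C}$, using $\eta\le 2\bar\mu$, which your display already presumes. This closes both cases at once.
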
 
\begin{proof}
	The proof is based on mathematical induction. Clearly, the bound holds at $t = t_0$, i.e., $C_{t_0}(K_{t_0}) \leq \overline{C}$. Suppose that $C_t(K_t) \leq \overline{C}$ for {some $t\geq t_0$}. Next, we show $C_{t+1}(K_{t+1}) \leq \overline{C}$.

	{By Lemma~\ref{lem:cost_diff_model} and $\delta\leq \min\{\underline p_9,1/\bar p_{10}\}$, the induction hypothesis also gives $K_t\in\mathcal S_{t+1}$ and $C_{t+1}(K_t)\leq \overline C+1$. Thus, the uniform constants chosen above apply to the update at time $t+1$.} By Lemma \ref{lem:bias}, \eqref{equ:exact}, and our hypothesis $C_t(K_t) \leq \overline{C}$, the gradient descent yields
	\begin{align*}
		&C_{t+1}(K_{t+1}) - C_{t+1}(K_t) \\
		&\leq -\frac{\eta}{2\mu_{t+1}} (C_{t+1}(K_t) - C_{t+1}^*) + \eta \bar{p}_6\bar{p}_3 {\epsilon}\\
		&\leq -\frac{\eta}{2\bar{\mu}} (C_{t+1}(K_t) - C_{t+1}^*) + \frac{\eta}{2\bar{\mu}}
	\end{align*}	
	
		By Lemma \ref{lem:cost_diff_model} and our condition on $\delta \leq \underline{p}_9$, the cost function $C_{t+1}(K_{t})$ associated with $(A_{t+1}, B_{t+1})$ can be upper bounded, i.e.,
	$
	|C_{t+1}(K_t) - C_t(K_t)| \leq \bar{p}_{10}\delta.
	$ Then, it follows that
	\begin{align*}
		&C_{t+1}(K_{t+1}) - C_{t}(K_t) \\
		&\leq -\frac{\eta}{2\bar{\mu}} (C_{t}(K_t) - C_{t+1}^*) + \frac{\eta}{2\bar{\mu}} +\frac{\eta\bar{p}_{10}\delta}{2\bar{\mu}} + \bar{p}_{10}\delta \\
		&\leq -\frac{\eta}{2\bar{\mu}} (C_{t}(K_t) - C_{t+1}^*) + \frac{\eta}{\bar{\mu}}\\
		&\leq -\frac{\eta}{2\bar{\mu}} (C_{t}(K_t) - \overline{C^*}) + \frac{\eta}{\bar{\mu}},
	\end{align*}
	where the last inequality follows from  $C_{t+1}^* \leq \overline{C^*}$.
	
	Consider two cases. If $C_t(K_t) \geq \overline{C^*} +2$, then 
	$$
	C_{t+1}(K_{t+1}) \leq C_t(K_t) - \frac{\eta}{{\bar{\mu}}}  + \frac{\eta}{{\bar{\mu}}} = C_t(K_t) \leq \overline{C}.
	$$
	Otherwise, if $C_t(K_t) < \overline{C^*} +2$, then
	$$
	{C_{t+1}}(K_{t+1}) \leq \overline{C^*} +2 + \frac{\eta}{\bar{\mu}} \leq \overline{C^*} + 2 + \frac{1}{\bar{l}\bar{\mu}}\leq \overline{C}.
	$$
	The proof {is complete}.
\end{proof}

	Next, we show the sequential stability of the closed-loop system {under Algorithm~\ref{algo2_switch}}.

\begin{lemma}\label{lem:seq_stable}
	There exist $\bar{p}_{8}, \bar{p}_{11}, \bar{p}_{12}$ as functions of $\overline{C}$ such that, if
	\begin{equation}\label{equ:condition}
		\begin{aligned}
			\delta &\leq  \min\left\{\underline{p}_9, {\frac{1}{\bar p_{10}},} \frac{1}{\bar{p}_{10}(1+2\bar{l}\bar{\mu})}, \bar{p}_{11}\right\}, \\
			{\epsilon} &\leq \min\left\{\underline{p}_2,\frac{1}{2\bar{p}_6\bar{p}_3\bar{\mu}}\right\},\\
			\eta &\leq \min\left\{\frac{\underline{p}_4}{\bar{p}_3{\epsilon}}, \underline{p}_7, \frac{1}{\bar{l}}, \bar{p}_{12} \right\},
		\end{aligned} 
	\end{equation} 
	then  $\{K_t\}$ of {Algorithm~\ref{algo2_switch}} is {sequentially strongly stable} with parameters $(\kappa, \alpha)$, where 
	\begin{equation}\label{equ:expka}
		\kappa = \sqrt{\frac{\overline{C}}{\min\{\underline{\sigma}(R), \underline{\sigma}(Q)\}}},~\alpha = 1 - \sqrt{1-\frac{1}{\kappa^2}}.
	\end{equation} 
\end{lemma}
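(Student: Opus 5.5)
Under the conditions of Lemma~\ref{lem:bounded}, which are contained in \eqref{equ:condition}, we have $C_t(K_t)\le\overline C$ for all $t\ge t_0$. The plan is to build the pair $(H_t,L_t)$ from the frozen-time closed-loop state covariance, following the standard construction in \cite{cohen2018online}. Let $\Sigma_t$ be the solution of $\Sigma_t=I_n+(A_t+B_tK_t)\Sigma_t(A_t+B_tK_t)^\top$, which exists since $K_t\in\mathcal S_t$. Set $H_t:=\Sigma_t^{1/2}$ and $L_t:=H_t^{-1}(A_t+B_tK_t)H_t$, so that $A_t+B_tK_t=H_tL_tH_t^{-1}$ holds by definition.

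For conditions (i)--(ii) of Definition~\ref{def:sss}, first note that $\Sigma_t\succeq I_n$, hence $\|H_t^{-1}\|\le1$. Lemma~\ref{lem:bounds}(i) gives $\|\Sigma_t\|\le \overline C/\underline\sigma(Q)\le\kappa^2$, so $\|H_t\|\le\kappa$. Lemma~\ref{lem:bounds}(iii) gives $\|K_t\|\le\|K_t\|_F\le(\overline C/\underline\sigma(R))^{1/2}\le\kappa$; this is why $\kappa$ is taken with the minimum of $\underline\sigma(Q),\underline\sigma(R)$ in \eqref{equ:expka}. For the bound on $L_t$, rewrite the Lyapunov equation as $L_tL_t^\top=I-\Sigma_t^{-1}\preceq (1-1/\kappa^2)I$, since $\Sigma_t\preceq\kappa^2I$. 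This yields $\|L_t\|\le\sqrt{1-1/\kappa^2}=1-\alpha$.

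Condition (iii) is the main step. Write $\|H_{t+1}^{-1}H_t\|^2=\|\Sigma_t^{1/2}\Sigma_{t+1}^{-1}\Sigma_t^{1/2}\|\le 1+\|\Sigma_{t+1}-\Sigma_t\|$, using $\Sigma_{t+1}^{-1}\preceq I$. It therefore suffices to show $\|\Sigma_{t+1}-\Sigma_t\|\le\alpha$, since $\sqrt{1+\alpha}\le1+\alpha/2$. Split the difference through the intermediate covariance $\Sigma_{t+1}(K_t)$, i.e., the covariance of $K_t$ on the next frozen model:
\[
\|\Sigma_{t+1}-\Sigma_t\|\le\|\Sigma_{t+1}(K_{t+1})-\Sigma_{t+1}(K_t)\|+\|\Sigma_{t+1}(K_t)-\Sigma_t(K_t)\|.
\]
The second term is the model-drift part. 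The proof of Lemma~\ref{lem:cost_diff_model} (via Lemma~\ref{lem:perturb}) bounds it by a polynomial in $\overline C$ times $\delta$, which we call $\bar p_{8}\delta$.

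The first term is the policy-step part. From the proof of Lemma~\ref{lem:bounded}, $K_t\in\mathcal S_{t+1}$ and $C_{t+1}(K_t)\le\overline C+1$. The step size is $\|K_{t+1}-K_t\|=\eta\|\nabla\hat C_{t+1}(K_t)\|\le\eta(\|\nabla C_{t+1}(K_t)\|+\bar p_3\epsilon)$ by Lemma~\ref{lem:grad_diff}. By \eqref{equ:gradK} and Lemma~\ref{lem:bounds}, $\|\nabla C_{t+1}(K_t)\|$ is bounded by a polynomial in $\overline C$. Combined with $\epsilon\le\underline p_2$, this gives $\|K_{t+1}-K_t\|\le\eta\, g(\overline C)$ for some explicit $g$. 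For $\eta$ small enough this is at most $\underline p_4$, so Lemma~\ref{lem:cost_diff} bounds the first term by $\bar p_5\,\eta\, g(\overline C)$.

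Finally, choose $\bar p_{11}:=\alpha/(2\bar p_8)$ and $\bar p_{12}:=\min\{\underline p_4/g,\ \alpha/(2\bar p_5 g)\}$. Both depend only on $\overline C$ and the problem data. With these choices $\|\Sigma_{t+1}-\Sigma_t\|\le\alpha$, which gives (iii). The delicate point is ensuring that every constant is evaluated uniformly at $\overline C$ or $\overline C+1$. This requires the induction of Lemma~\ref{lem:bounded} to run simultaneously, which it does because the conditions in \eqref{equ:condition} include those in \eqref{equ:cond}.
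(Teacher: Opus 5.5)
Your proof is correct and follows essentially the same route as the paper: you take $H_t=\Sigma_t^{1/2}$, obtain (i)--(ii) from the uniform bound $C_t(K_t)\le\overline C$, and reduce (iii) to smallness of $\|\Sigma_{t+1}-\Sigma_t\|$, which you control by a $\delta$-drift term plus a policy-step term ($\eta$ times a uniform gradient bound) via Lyapunov perturbation. Your details are slightly sharper and more careful than the paper's: the congruence bound $\|H_{t+1}^{-1}H_t\|^2\le 1+\|\Sigma_{t+1}-\Sigma_t\|$ replaces the paper's $1+2\kappa^2\|\Sigma_{t+1}-\Sigma_t\|$ from matrix-inverse perturbation, you verify $\|L_t\|\le 1-\alpha$ explicitly via $L_tL_t^\top=I-\Sigma_t^{-1}$ where the paper only asserts it, and you use the gradient index $\nabla\hat C_{t+1}(K_t)$ consistent with Algorithm~\ref{algo2_switch} where the paper writes $\nabla\hat C_t(K_t)$.
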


\begin{proof}
	By Lemma \ref{lem:bounded}, the cost is uniformly bounded, i.e., {$C_t(K_t)\leq \overline{C}$} for all $t\geq t_0$. Then, with the parameters $\kappa, \alpha$, the first two conditions (i) and (ii) in Definition \ref{def:sss} are satisfied, i.e., the policy $K_t$ is $(\kappa, \alpha)$-strongly stable. Further, it suffices to show (iii) $\|H_{t+1}^{-1}H_t\| \leq 1 + \alpha/2$, or equivalently,
	$\|\Sigma_{t+1}^{-1}\Sigma_t\| \leq (1 + {\alpha}/{2})^2,$ where
	\begin{align*}
		\Sigma_{t+1}
		&= I_n + (A_{t+1} + B_{t+1}K_{t+1})\Sigma_{t+1}\\
		&\qquad\cdot(A_{t+1} + B_{t+1}K_{t+1})^{\top}, \\
		\Sigma_{t}
		&= I_n + (A_{t} + B_{t}K_{t})\Sigma_{t}
		(A_{t} + B_{t}K_{t})^{\top}.
	\end{align*}
	{Indeed, the Lyapunov equation gives a positive definite covariance $\Sigma_t$ for each frozen-time closed-loop matrix. Choosing the similarity factor $H_t=\Sigma_t^{1/2}$, condition (iii) follows from
	$\|H_{t+1}^{-1}H_t\|^2
	=\|\Sigma_{t+1}^{-1/2}\Sigma_t\Sigma_{t+1}^{-1/2}\|
	=\|\Sigma_{t+1}^{-1}\Sigma_t\|$,}
	{where the last equality uses similarity of the two positive definite products.}
	
	By the perturbation theory for matrix inverse \cite[Theorem 35]{fazel2018global}, if $\|\Sigma_{t+1} - \Sigma_t\|<1/2$, then
	$
	\|\Sigma_{t+1}^{-1} - \Sigma_t^{-1}\| \leq {2\|\Sigma_{t+1} - \Sigma_t\|}/{\underline{\sigma}(\Sigma_t)} \leq 2\|\Sigma_{t+1} - \Sigma_t\|.
	$ Further,
	\begin{align*}
		\|\Sigma_{t+1}^{-1}\Sigma_t\|  
		& = \|(\Sigma_{t+1}^{-1} - \Sigma_t^{-1})\Sigma_t + I_n \|\\
		&\leq 1 + 2\|\Sigma_{t+1} - \Sigma_t\|\|\Sigma_t\| \\
		&\leq 1 + 2 \|\Sigma_{t+1} - \Sigma_t\|  {{C_t}(K_t)}/{\underline{\sigma}(Q)}  \\
		&\leq 1 + 2\kappa^2 \|\Sigma_{t+1} - \Sigma_t\|.
	\end{align*}
	
	Thus, we require $ \|\Sigma_{t+1} - \Sigma_t\|$ to be sufficiently small. By {the same Lyapunov perturbation argument as in Lemma~\ref{lem:cost_diff}}, if $\|A_{t+1}+B_{t+1}K_{t+1} - (A_t+B_tK_t)\| \leq \underline{p}_4$, then $\|\Sigma_{t+1} - \Sigma_t\| \leq \bar{p}_5 \|A_{t+1}+B_{t+1}K_{t+1} - (A_t+B_tK_t)\|$. Since
	 \begin{align*}
	 	& [B_{t+1}, A_{t+1}]\begin{bmatrix}
	 		K_{t+1} \\ I_n
	 	\end{bmatrix} - [B_t, A_t]\begin{bmatrix}
	 		K_t \\ I_n
	 	\end{bmatrix} \\
	 	&= ([B_{t+1}, A_{t+1}] - [B_t, A_t])\begin{bmatrix}
	 		K_t \\ I_n
	 	\end{bmatrix} \\
	 	&{+ [B_{t+1}, A_{t+1}]\left(\begin{bmatrix}
	 		K_{t+1} \\ I_n
	 	\end{bmatrix} - \begin{bmatrix}
	 		K_t \\ I_n
	 	\end{bmatrix}\right),}
	 \end{align*}
	 it holds that
	 \begin{align*}
	 	&\|A_{t+1}+B_{t+1}K_{t+1} - (A_t+B_tK_t)\| \\
	 	&\leq \delta(1+{\|K_t\|_F}) + \|K_{t+1} - K_t\|(a_m+b_m)\\
	 	&=\delta(1+{\|K_t\|_F}) + \eta\|\nabla \hat{C}_t(K_t)\|(a_m+b_m).
	 \end{align*}
	 We first provide a bound for $\|\nabla \hat{C}_t(K_t)\|$. By Lemma \ref{lem:grad_diff}, we have 
	 $
	 \| \nabla\hat{C}_t({K_t}) \| \leq \|\nabla{C}_t({K_t}) \| + {\bar{p}_3 }{\epsilon} \leq \|\nabla{C}_t({K_t}) \| + \bar{p}_3 \underline{p}_2.
	 $
	 Since {$C_t(K_t)\leq \overline{C}$}, the right-hand side has a uniform upper bound denoted by $\bar{p}_8$. Noting that {$\|K_t\|_F \leq (\overline{C}/{\underline{\sigma}(R)})^{{1}/{2}}$},
	 to ensure $\|A_{t+1}+B_{t+1}K_{t+1} - (A_t+B_tK_t)\| \leq \underline{p}_4$ and $\|\Sigma_{t+1} - \Sigma_t\|<1/2$ it suffices to let
	 \begin{equation}\label{equ:condi1}
	 		 \begin{aligned}
	 		&\delta(1+(\overline{C}/{\underline{\sigma}(R)})^{{1}/{2}}) \leq \min\left\{\frac{\underline{p}_4}{2}, \frac{1}{4\bar{p}_5} \right\}, \\
	 		&\eta\bar{p}_8(a_m+b_m) \leq \min\left\{\frac{\underline{p}_4}{2}, \frac{1}{4\bar{p}_5} \right\}.
	 	\end{aligned}
	 \end{equation}

	  Furthermore, it follows that $\|\Sigma_{t+1}^{-1}\Sigma_t\| \leq  1 + 2\kappa^2 \|\Sigma_{t+1} - \Sigma_t\| \leq 1 + 2\kappa^2(\delta(1+(\overline{C}/{\underline{\sigma}(R)})^{{1}/{2}}) + \eta\bar{p}_8(a_m+b_m))$. Since we also require $\|\Sigma_{t+1}^{-1}\Sigma_t\| \leq (1 + {\alpha}/{2})^2$, it suffices to let $2\kappa^2(\delta(1+(\overline{C}/{\underline{\sigma}(R)})^{{1}/{2}}) + \eta\bar{p}_8(a_m+b_m)) \leq \alpha$, i.e.,
	  \[
	  {\begin{aligned}
	  	 \delta\left(1+\left(\overline{C}/\underline{\sigma}(R)\right)^{{1}/{2}}\right)
	  	 &\leq \frac{\alpha}{4\kappa^2},\\
	  	 \eta\bar{p}_8(a_m+b_m)
	  	 &\leq \frac{\alpha}{4\kappa^2}.
	  \end{aligned}}
	  \]
	  Together with \eqref{equ:condi1}, we denote the bound on $\delta$ and $\eta$ as $\delta \leq \bar{p}_{11}$ and $\eta \leq \bar{p}_{12}$, respectively.  The proof {is complete}. 
\end{proof}

{It remains to show that sequential stability leads to the state bound \eqref{equ:bound_state}. Under Algorithm~\ref{algo2_switch}, the closed-loop dynamics can be written as}
\[
{x_{t+1}=(A_t+B_tK_t)x_t+B_te_t+w_t.}
\]
{Since $\{K_t\}$ is sequentially stable with parameters $(\kappa,\alpha)$, the standard input-to-state estimate for sequentially stable systems gives \cite{cohen2019learning}}
\[
{\|x_t\|\leq \kappa\left(1-\frac{\alpha}{2}\right)^{t-t_0}\|x_{t_0}\|
+\frac{2\kappa}{\alpha}\sup_{s\geq t_0}\|B_se_s+w_s\|.}
\]
{Using $\|B_s\|\leq b_m$, $\|e_s\|\leq e_m$, and $\|w_s\|\leq w_m$, we have $\sup_{s\geq t_0}\|B_se_s+w_s\|\leq b_me_m+w_m$. Renaming $(\kappa,\alpha)$ as $(\nu_5,\nu_6)$ yields \eqref{equ:bound_state}.}

{Finally, we prove the cumulative optimality-gap bound. Under the conditions of Lemma~\ref{lem:seq_stable}, the uniform constants in \eqref{equ:appro} give}
\begin{equation}\label{equ:regret_step_ltv}
{
C_t(K_{t+1})-C_t(K_t)
\leq
-\frac{\eta}{2\bar\mu}\big(C_t(K_t)-C_t^*\big)
+\eta \bar p_3\bar p_6\epsilon .}
\end{equation}
{Therefore, for every $T>t_0$, it holds that}

\begin{equation}\label{equ:sddd}
\begin{aligned}
	&\sum_{t=t_0}^{T-1}\big(C_t(K_t)-C_t^*\big)\\
	&\leq
	\frac{2\bar\mu}{\eta}
	\sum_{t=t_0}^{T-1}\big(C_t(K_t)-C_t(K_{t+1})\big)
	+2\bar\mu\bar p_3\bar p_6(T-t_0)\epsilon .
\end{aligned}
\end{equation}

{The first sum is   telescoping. Indeed,}
\[
{\begin{aligned}
&\sum_{t=t_0}^{T-1}\big(C_t(K_t)-C_t(K_{t+1})\big)\\
&=
C_{t_0}(K_{t_0})-C_{T-1}(K_T)\\
&\quad+
\sum_{t=t_0}^{T-2}\big(C_{t+1}(K_{t+1})
-C_t(K_{t+1})\big).
\end{aligned}}
\]
{Since the costs are nonnegative and Lemma~\ref{lem:cost_diff_model} applies uniformly on the bounded-cost set, it follows that}
\[
{\begin{aligned}
&\sum_{t=t_0}^{T-1}\big(C_t(K_t)-C_t(K_{t+1})\big)\\
&\qquad\leq
C_{t_0}(K_{t_0})+(T-t_0-1)\bar p_{10}\delta .
\end{aligned}}
\]
{Substituting this estimate into \eqref{equ:sddd} gives}
\[
{\begin{aligned}
&\sum_{t=t_0}^{T-1}\big(C_t(K_t)-C_t^*\big)\\
&\leq
\frac{2\bar\mu C_{t_0}(K_{t_0})}{\eta}
+\frac{2\bar\mu\bar p_{10}(T-t_0)\delta}{\eta}
+2\bar\mu\bar p_3\bar p_6(T-t_0)\epsilon.
\end{aligned}}
\]
{Dividing both sides by $T-t_0$ yields \eqref{equ:regret_ltv} after renaming constants.}

\section{Proof of Theorem \ref{thm:switch}}

Define
\begin{equation}\label{equ:upper}
\overline{C} = \overline{C^*} + 1 + \frac{1}{4\bar{l}\bar{\mu}} + {p}_{10}^*{p}_{9}^* + C_0(K_{t_0}),
\end{equation}
where ${p}_{9}^*, {p}_{10}^*$ are constants associated with $\overline{C^*} +1$.
We show that the cost function is uniformly upper bounded, i.e., $C_i(K_t)\leq \overline{C}$ for $t>t_0$, by using mathematical induction over switching intervals. At the beginning $t_0 = T_0$, it holds that $C_0(K_{t_0})\leq \overline{C}$ according to the definition \eqref{equ:upper}.

 
Consider that we are in the $i$-th mode, i.e., $T_i \leq t <T_{i+1}$. Assume that we have $C_{i-1}(K_{T_i}) \leq \overline{C^*} +1 < \overline{C}$. Then, after the system changes at $T_i$, the cost becomes $C_{i}(K_{T_i})$. By Lemma \ref{lem:cost_diff_model},  if $\delta \leq p_9^*$, then 
	$
	|C_{i}(K_{T_i}) - C_{i-1}(K_{T_i})| \leq p_{10}^*\delta.
	$
By the hypothesis $C_{i-1}(K_{T_i}) \leq \overline{C^*} +1$, the  cost  $C_{i}(K_{T_i})$ is upper bounded as $C_{i}(K_{T_i}) \leq \overline{C^*} +1 + {p}^*_{10}{p}^*_{9} <\overline{C}$ under the condition $\delta \leq p^*_9$.  Next, we show that under the given dwell time, the cost $C_{i}(K_{T_{i+1}})$ can be reduced to $ \overline{C^*} +1$.

Under the condition  
\begin{equation}\label{equ:condition27}
		{\begin{aligned}
		\delta &\leq  \min\left\{p_9^*,\frac{1}{\bar p_{10}^*}\right\},\\
		\epsilon_{\rm sw}
		&\leq \min\left\{\underline{p}_2,
		\frac{1}{4\bar{p}_6\bar{p}_3\bar{\mu}}\right\},\\
		\eta
		&\leq \min\left\{\frac{\underline{p}_4}{\bar{p}_3\epsilon_{\rm sw}},
		\underline{p}_7, \frac{1}{\bar{l}} \right\}.
		\end{aligned}}
\end{equation}
for $T_i \leq t <T_{i+1}$, the gradient descent satisfies
\begin{equation}
	C_{i}(K_{t+1}) - C_{i}(K_t) 
	\leq -\frac{\eta}{2\mu_{i}} (C_{i}(K_t) - C_{i}^*) + \eta \bar{p}_6\bar{p}_3 {\epsilon_{\rm sw}},
\end{equation}	 
Then,  we have
\begin{align*}
&C_{i}(K_{t}) - C_i^* \leq \left(1 - \frac{\eta}{2\mu_i}\right)^{t-T_i}(C_{i}(K_{T_i}) - C_i^*) \\
&+ \eta \bar{p}_6\bar{p}_3 {\epsilon_{\rm sw}} \sum_{j = T_i}^{t}\left(1-\frac{\eta}{2\mu_i}\right)^{t-j} \\
&\leq \left(1 - \frac{\eta}{2\mu_i}\right)^{t-T_i}(C_{i}(K_{T_i}) - C_i^*) + 2\mu_i \bar{p}_6\bar{p}_3 {\epsilon_{\rm sw}}   \\
&\leq \left(1 - \frac{\eta}{2\mu_i}\right)^{t-T_i}\overline{C} + \frac{1}{2},
\end{align*}
where the last inequality follows from \eqref{equ:condition27}.

Under the condition on the dwell time
$$
\tau \geq  \left\lceil  \frac{- \log(2\overline{C})}{\log(1-\frac{\eta}{2{\bar{\mu}}})} \right\rceil,
$$
the previous inequality further leads to $C_{i}(K_{T_{i+1}}) \leq  C_i^* + 1 \leq \overline{C^*} + 1$. 

Now, we prove $C_i(K_t) \leq \overline{C}$, for $T_i\leq t < T_{i+1}$. It holds 
\begin{align*}
	&C_{i}(K_{t+1}) - C_{i}(K_{t}) \\
	&\leq - \frac{\eta}{2\mu_i}(C_{i}(K_{t}) - C_i^*) 
	+ \eta \bar{p}_6\bar{p}_3 {\epsilon_{\rm sw}}  \\
	& \leq - \frac{\eta}{2\bar{\mu}}(C_{i}(K_{t}) - C_i^*) 
	+ \frac{\eta}{4\bar{\mu}} \\
	&  \leq - \frac{\eta}{2\bar{\mu}}(C_{i}(K_{t}) - \overline{C^*}) 
	+ \frac{\eta}{4\bar{\mu}}.
\end{align*} 
Consider two cases. If $C_i(K_t) \geq \overline{C^*} +1/2$, then 
$$
C_{i}(K_{t+1}) \leq C_i(K_t) - \frac{\eta}{4{\bar{\mu}}}  + \frac{\eta}{4{\bar{\mu}}} = C_i(K_t) \leq \overline{C}.
$$
Otherwise, if {$C_i(K_t)$} < $\overline{C^*} +1/2$, then
$$
C_i(K_{t+1}) \leq \overline{C^*} +\frac{1}{2} + \frac{\eta}{4\bar{\mu}} \leq \overline{C^*} +\frac{1}{2} + \frac{1}{4\bar{l}\bar{\mu}}\leq \overline{C}.
$$ 

So far, we have proved that $C_i(K_t) \leq \overline{C}$ for $T_i\leq t \leq T_{i+1}$. Furthermore, we can show that the closed-loop system is sequentially stable in the interval $T_i \leq t < T_{i+1}$.

\begin{lemma} 
	There exist $\bar{p}_{12}$ as a function of $\overline{C}$ such that, if
	\[
		{\begin{aligned}
		\delta &\leq  \min\left\{p_9^*,\frac{1}{\bar p_{10}^*}\right\},\\
		\epsilon_{\rm sw}
		&\leq \min\left\{\underline{p}_2,
		\frac{1}{4\bar{p}_6\bar{p}_3\bar{\mu}}\right\},\\
		\eta &\leq \min\left\{\frac{\underline{p}_4}{\bar{p}_3\epsilon_{\rm sw}},
		\underline{p}_7, \frac{1}{\bar{l}}, \underline{p}_{12} \right\},
		\end{aligned}}
	\]
	then $\{K_t\}, T_i\leq t \leq T_{i+1}$ is sequentially   stable for the system $(A_i, B_i)$ with parameters $(\kappa, \alpha)$, where $\kappa, \alpha$ are given by \eqref{equ:expka}.
\end{lemma}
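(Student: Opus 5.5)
Within a fixed mode the plant is the constant pair $(A_i,B_i)$, so the argument is the proof of Lemma~\ref{lem:seq_stable} with the drift term removed. The previous step established $C_i(K_t)\le\overline{C}$ for $T_i\le t\le T_{i+1}$, with $\overline{C}$ as in \eqref{equ:upper}. Conditions (i) and (ii) of Definition~\ref{def:sss} then follow from Lemma~\ref{lem:bounds}, as in Lemma~\ref{lem:seq_stable}. I take $H_t=\Sigma_{i}(K_t)^{1/2}$, where $\Sigma_i(K_t)$ is the closed-loop covariance of $A_i+B_iK_t$, and $L_t=H_t^{-1}(A_i+B_iK_t)H_t$. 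The Lyapunov equation gives $L_tL_t^\top = I-\Sigma_t^{-1}\preceq (1-1/\|\Sigma_t\|)I$. Since $\|\Sigma_t\|\le \overline{C}/\underline{\sigma}(Q)\le\kappa^2$, this yields $\|L_t\|\le\sqrt{1-1/\kappa^2}=1-\alpha$. Lemma~\ref{lem:bounds}(iii) gives $\|K_t\|\le\kappa$. Finally $\Sigma_t\succeq I$ gives $\|H_t^{-1}\|\le1$ and $\|H_t\|\le\kappa$.

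The remaining work is condition (iii), $\|\Sigma_{t+1}^{-1}\Sigma_t\|\le(1+\alpha/2)^2$. As in Lemma~\ref{lem:seq_stable}, it suffices to have $\|\Sigma_{t+1}-\Sigma_t\|<1/2$ and $2\kappa^2\|\Sigma_{t+1}-\Sigma_t\|\le\alpha$. The matrix-inverse perturbation bound then gives $\|\Sigma_{t+1}^{-1}\Sigma_t\|\le 1+2\kappa^2\|\Sigma_{t+1}-\Sigma_t\|\le 1+\alpha$.

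Because the mode is fixed, the closed-loop difference is only $B_i(K_{t+1}-K_t)$. Its norm is at most $b_m\eta\|\nabla\hat C_t(K_t)\|$. Lemma~\ref{lem:grad_diff} bounds the gradient by $\|\nabla C_i(K_t)\|+\bar p_3\epsilon_{\rm sw}$, which is uniformly at most $\bar p_8$ on the sublevel set $\{C_i\le\overline{C}\}$, using $\epsilon_{\rm sw}\le\underline p_2$. If $\eta b_m\bar p_8\le \underline p_4$, Lemma~\ref{lem:cost_diff} applies and gives $\|\Sigma_{t+1}-\Sigma_t\|\le\bar p_5\eta b_m\bar p_8$. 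It is therefore enough to impose
\[
\eta\le \underline p_{12}:=\min\left\{\frac{\underline p_4}{b_m\bar p_8},\ \frac{1}{4\bar p_5 b_m\bar p_8},\ \frac{\alpha}{2\kappa^2\bar p_5 b_m\bar p_8}\right\},
\]
and this constant depends only on $\overline{C}$ and the problem data. No condition on $\delta$ is needed inside the interval.

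The main obstacle is the interval endpoint $t=T_{i+1}$ and the premise of the gradient bound. Lemma~\ref{lem:grad_diff} must be applied with the identification error measured against $(A_i,B_i)$. Lemma~\ref{lem:id_error_sw} supplies exactly this bound, $\epsilon_{\rm sw}$, for $T_i\le t<T_{i+1}$. For the last step into $K_{T_{i+1}}$, I use the estimate at time $T_{i+1}-1$, which still lies in mode $i$. The cost bound $C_i(K_{T_{i+1}})\le\overline{C}$ was established just before this lemma, so the uniform constants also apply at the endpoint.
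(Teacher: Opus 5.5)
Your proposal is correct and follows the route the paper indicates: the paper omits this proof as ``similar to Lemma~\ref{lem:seq_stable}'', and your argument is that proof specialized to a fixed mode, where the closed-loop change reduces to $B_i(K_{t+1}-K_t)$ and no $\delta$-term enters condition (iii). You are more explicit than the paper in the details. You write out $H_t=\Sigma_i(K_t)^{1/2}$ with $L_tL_t^\top=I-\Sigma_t^{-1}$, and you check that Lemma~\ref{lem:id_error_sw} still bounds the estimation error against $(A_i,B_i)$ at the step that produces $K_{T_{i+1}}$. One cosmetic point: Lemma~\ref{lem:cost_diff} is stated in terms of the gain perturbation $\|K_{t+1}-K_t\|$, not the closed-loop perturbation. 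So your $b_m$-scaled condition should instead come from applying Lemma~\ref{lem:perturb} directly to the closed-loop matrices; this changes only the constants.
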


The proof is similar to that of Lemma \ref{lem:seq_stable} and omitted.

The difficulty arises when switching occurs: the closed-loop matrix may change significantly and fail to satisfy sequential stability, which can cause the state norm to increase abruptly. To address this, we show that (i) the state remains upper bounded under a dwell-time condition, and (ii) the state norm eventually decreases to a constant level after the switching phase. As the sequence $\{K_t\}$ for $T_i\leq t < T_{i+1}$ is \((\kappa, \alpha)\)-sequentially stable, it holds that
\begin{equation}\label{22}
{\begin{aligned}
\|x_t\| \leq{}& \kappa \left(1 - \frac{\alpha}{2}\right)^{t-T_{i}} \|x_{T_{i}}\|\\
&+  \frac{2  \kappa}{\alpha}(b_me_m+w_m),
\qquad T_i\leq t < T_{i+1},
\end{aligned}}
\end{equation}
Hence, at the end of the $i$-th mode, the state satisfies $
	\|x_{T_{i+1}}\| \leq \kappa (1 - \frac{\alpha}{2})^{\tau} \|x_{T_{i}}\| +  \frac{2  \kappa}{\alpha} {(b_me_m+w_m)}.$
Denote the maximum of the state norm during $T_i \leq t \leq T_{i+1}$ as $x_{\text{max},i}$. Then, 
\begin{equation}\label{equ:beta}
\|x_{\text{max},i}\|  \leq {\kappa}\|x_{T_i}\| + {c_2}. 
\end{equation}

For the sake of simplicity, define $c_1 = \kappa (1 - \frac{\alpha}{2})^{\tau}$ and $c_2= \frac{2\kappa}{\alpha} {(b_me_m+w_m)}$. Then, it follows from \eqref{22} that
\begin{align*}
	\|x_{T_{i}}\| \leq c_1^i \|x_{T_{0}}\| +  {\frac{c_2}{1-c_1}},
\end{align*}
and \eqref{equ:beta} further leads to
$$
\|x_{\text{max},i}\| 
\leq {\kappa} c_1^i \|x_{T_0}\| + c_2+ \frac{{\kappa}c_2}{1-c_1}.
$$
{Finally, we prove the cumulative frozen-time optimality-gap bound. For each mode $i$ and each $t\in[T_i,T_{i+1})$, the descent inequality above gives}
\[
{
C_i(K_{t+1})-C_i(K_t)
\leq
-\frac{\eta}{2\bar\mu}\big(C_i(K_t)-C_i^*\big)
+\eta\bar p_6\bar p_3\epsilon_{\rm sw}.}
\]
{Let $T_i^T:=\min\{T_{i+1},T\}$. Summing this inequality over all active intervals up to the horizon $T$ yields}
\[
{\begin{aligned}
&\sum_{i=0}^{M_T}\sum_{t=T_i}^{T_i^T-1}
\big(C_i(K_t)-C_i^*\big)\\
&\leq
\frac{2\bar\mu}{\eta}
\sum_{i=0}^{M_T}
\big(C_i(K_{T_i})-C_i(K_{T_i^T})\big)\\
&\quad
+2\bar\mu\bar p_6\bar p_3(T-T_0)\epsilon_{\rm sw}.
\end{aligned}}
\]
{The sum over modes telescopes up to the objective jumps at switching instants. Using nonnegativity of the terminal cost and the uniform perturbation bound
$|C_i(K)-C_{i-1}(K)|\leq \bar p_{10}^*\delta$ on the bounded-cost set, it follows that}
\[
{
\sum_{i=0}^{M_T}
\big(C_i(K_{T_i})-C_i(K_{T_i^T})\big)
\leq
C_0(K_{T_0})+M_T\bar p_{10}^*\delta .}
\]
{Consequently,}
\[
{\begin{aligned}
&\sum_{i=0}^{M_T}\sum_{t=T_i}^{T_i^T-1}
\big(C_i(K_t)-C_i^*\big)\\
&\leq
\frac{2\bar\mu C_0(K_{T_0})}{\eta}
+\frac{2\bar\mu\bar p_{10}^*M_T\delta}{\eta}
+2\bar\mu\bar p_6\bar p_3(T-T_0)\epsilon_{\rm sw},
\end{aligned}}
\]
{Dividing both sides by $T-T_0$ yields \eqref{equ:regret_sw} after renaming constants.}
The proof {is complete}.

\bibliographystyle{plain}
\bibliography{mybibfile}

\end{document}